\theoremstyle{definition}
\newtheorem{theorem}{Theorem}[section]
\newtheorem{lemma}[theorem]{Lemma}
\newtheorem{claim}[theorem]{Claim}
\newtheorem{corollary}[theorem]{Corollary}
\newtheorem{proposition}[theorem]{Proposition}
\title{Unavoidable patterns and plane paths in dense topological graphs}
\author{Bal\'azs Keszegh\thanks{HUN-REN Alfréd Rényi Institute of Mathematics and ELTE Eötvös Loránd University, Budapest, Hungary.
Supported by the ERC Advanced Grant ``ERMiD'', no.~101054936 and by the EXCELLENCE-24 project no.~151504 Combinatorics and Geometry of the NRDI Fund. Email: {\tt keszegh@renyi.hu}.} \and Andrew Suk\thanks{Department of Mathematics, University of California at San Diego, La Jolla, CA, 92093 USA. Supported by NSF grant DMS-2246847. Email: {\tt asuk@ucsd.edu}.}  \and G\'abor Tardos\thanks{HUN-REN Alfréd Rényi Institute of Mathematics, Budapest, Hungary. Supported by ERC Advanced Grants ``GeoScape'', no. 882971 and ``ERMiD'', no. 101054936. Email: {\tt tardos@renyi.hu}.} \and Ji Zeng\thanks{HUN-REN Alfréd Rényi Institute of Mathematics, Budapest, Hungary. Supported by ERC Advanced Grants ``GeoScape'', no. 882971 and ``ERMiD'', no. 101054936. Email: {\tt zeng.ji@renyi.hu}.}}
\date{}
\begin{document}

\maketitle

\begin{abstract}
Let $C_{s,t}$ be the complete bipartite geometric graph, with $s$ and $t$ vertices on two distinct parallel lines respectively, and all $s t$ straight-line edges drawn between them. In this paper, we show that every complete bipartite simple topological graph, with parts of size $2(k-1)^4 + 1$ and $2^{k^{5k}}$, contains a topological subgraph weakly isomorphic to $C_{k,k}$.  As a corollary, every $n$-vertex simple topological graph not containing a plane path of length $k$ has at most $O_k(n^{2 - 8/k^4})$ edges.  When $k = 3$, we obtain a stronger bound by showing that every $n$-vertex simple topological graph not containing a plane path of length 3 has at most $O(n^{4/3})$ edges. We also prove that $x$-monotone simple topological graphs not containing a plane path of length 3 have at most a linear number of edges.  
\end{abstract}

\section{Introduction}

A \textit{topological graph} is a graph drawn in the plane such that its vertices are represented by points and its edges are represented by non-self-intersecting arcs connecting the corresponding points. No edge is allowed to pass through any point representing a vertex other than its endpoints. Tangencies between the edges are not allowed. That is, if two edges share an interior point, then they must properly cross at this point. A topological graph is called \textit{plane} if there are no crossing edges. Given a topological graph $G$, we say that $H$ is a \textit{topological subgraph} of $G$ if $V(H) \subset V(G)$ and $E(H) \subset E(G)$. We say that $G$ and $H$ are \textit{weakly isomorphic} if there is an incidence preserving bijection between the vertices and edges of $G$ and $H$ such that two edges of $G$ cross if and only if the corresponding edges in $H$ cross as well. A topological graph is \textit{simple} if every pair of its edges intersect at most once: at a common endpoint or at a proper crossing. Simple topological graphs are also known as \textit{simple drawings}. If the edges of a topological graph are drawn with straight-line segments, then it is called \textit{geometric}. We call a geometric graph \textit{convex} if its vertices are in convex position.

In this paper, we are interested in finding large unavoidable patterns in dense simple topological graphs, and in particular, finding large plane paths.  Let us emphasize here that a \emph{path of length $k$} consists of $k + 1$ distinct vertices and $k$ edges.  It is not hard to see that the simple condition here is necessary for plane paths, as one can easily draw $K_n$ in the plane such that every pair of edges cross. Moreover, a construction due to Pach and T\'oth \cite{PT10} shows that there is a drawing of $K_n$ in the plane such that every pair of edges crosses exactly once or twice.  In 2003, Pach, Solymosi, and T\'oth \cite{PST} showed that every complete $n$-vertex simple topological graph contains a topological subgraph on $\Omega(\log^{1/8}(n))$ vertices that is weakly isomorphic to either a complete convex geometric graph or a so-called complete twisted graph. This bound was later improved by Suk and Zeng \cite{SZ} to $(\log n)^{1/4 - o(1)}$.  In 1998, Negami proved a bipartite analogue of this theorem.  Let $C_{s,t}$ be a complete bipartite geometric graph with vertex sets $U$ and $V$, where $|U| = s$ and $|V| = t$, such that the vertices in $U$ lie on the $y$-axis and the vertices in $V$ lie on the vertical line $x = 1$. (It is easy to see that $C_{s,t}$ is determined up to weak isomorphism independent of the exact placement of the vertices on these vertical lines.) In \cite{Ne}, Negami showed that for every $k > 1$, there is a minimum integer $n = n(k)$ such that every complete bipartite simple topological graph with $n$ vertices in each part contains a topological subgraph weakly isomorphic to $C_{k,k}$. The proof in \cite{Ne} is based on 4-uniform hypergraph Ramsey theory and no explicit bound for $n(k)$ is given. By applying more geometric arguments, we establish the following stronger result.
 
\begin{theorem}\label{main}
Every complete bipartite simple topological graph with vertex sets $U$ and $V$, where $|U| > 2(k-1)^4$ and $|V| \geq 2^{k^{5k}}$, contains a topological subgraph weakly isomorphic to $C_{k,k}$.
\end{theorem}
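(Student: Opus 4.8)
The plan is to exploit the large gap between the two hypotheses: since $|V| \ge 2^{k^{5k}}$ is vastly larger than any quantity depending only on $|U|$, we may afford a long sequence of pigeonhole (Ramsey-type) reductions on the $V$-side, whereas on the $U$-side, where $|U| > 2(k-1)^4 = 2((k-1)^2)^2$, we can only afford a single dichotomous split followed by exactly two applications of the Erd\H{o}s--Szekeres theorem. Two facts organize the argument. First, in a simple drawing two edges with a common endpoint cannot cross, so every star $S_w$ (the set of all edges at a vertex $w$) is a plane tree; its complement is a single face whose boundary walk records the \emph{rotation} at $w$, i.e.\ the cyclic order in which $w$ sees the other vertices. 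Second, a complete bipartite simple topological graph on ordered parts $u_1 < \cdots < u_k$ and $v_1 < \cdots < v_k$ is weakly isomorphic to $C_{k,k}$ exactly when $u_i v_j$ crosses $u_{i'} v_{j'}$ if and only if $(i-i')(j-j') < 0$ (edges sharing a vertex never cross). So it suffices to produce $k$-element subsets of $U$ and of $V$, with linear orders, realizing this inversion pattern.

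There are at most $(|U|-1)!$ possible rotations of a vertex of $V$ on $U$, which is negligible next to $|V|$, so pigeonholing yields a still doubly-exponentially large subset $V_1 \subseteq V$ on which all vertices induce one common cyclic order $\rho$ on $U$. Fix two reference vertices $v_0, v_1 \in V_1$. Cutting $\rho$ gives one linear order on $U$, and the way the edges from $v_0$ and from $v_1$ meet each $u \in U$ gives a second monotone quantity together with a $\{+,-\}$-dichotomy: in $C_{k,k}$ the set $\{u' : uv_0 \text{ crosses } u'v_1\}$ is an initial or a final segment, the choice being the dichotomy and the length of the segment being monotone in the position of $u$. A majority split (the factor $2$) followed by two applications of Erd\H{o}s--Szekeres (the exponent $4 = 2 \cdot 2$) then produces $U_1 \subseteq U$ with $|U_1| = k$ on which both orders agree with a single linear order $u_1 < \cdots < u_k$; this is exactly what $|U| > 2(k-1)^4$ buys.

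With $U_1$ now fixed, all data attached to $U_1$ is of bounded size: the weak-isomorphism type of the drawn complete bipartite graph between $U_1$ and a single further vertex of $V_1$ has at most $2^{O(k^4)}$ possibilities. A further iterated pigeonholing on the still enormous $V_1$ -- on the order of $k$ rounds, each costing an exponential factor, which is the source of the bound $2^{k^{5k}}$ on $|V|$ -- produces $V_2 \subseteq V_1$ with $|V_2| = k$, ordered $v_1 < \cdots < v_k$, whose joint type with $U_1$ is the canonical one we are after.

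What remains -- and what I expect to be the entire difficulty -- is to prove that this canonical drawing on $U_1 \cup V_2$ is indeed $C_{k,k}$, i.e.\ that the monotonicity and canonicity enforced above really force $u_i v_j$ to cross $u_{i'} v_{j'}$ on exactly the inversions. The subtlety is genuine: in the analogous complete (non-bipartite) setting of Pach--Solymosi--T\'oth the same canonicalization admits a \emph{second} outcome, the complete twisted graph, and one must show that here bipartiteness excludes every twisted-type configuration. The argument should be topological: viewing a reference star $S_{v_j}$ (respectively $S_{u_i}$) as a plane tree through whose single complementary face all other arcs must run, one tracks each arc $u_i v_j$ through the cells delimited by that star, controls crossing parities by a winding/continuity argument, and shows that any departure from the inversion pattern would force either a forbidden second crossing of two edges or a violation of the common rotation $\rho$ or of the monotone quantity fixed when choosing $U_1$. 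The reductions producing $V_1$, $U_1$, and $V_2$ are pigeonhole bookkeeping whose only real choice is isolating the invariants on the $U$-side so that two Erd\H{o}s--Szekeres applications suffice; the geometric rigidity needed in this last step is the heart of the matter.
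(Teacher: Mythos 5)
Your proposal does not close the argument. You yourself flag that ``the entire difficulty'' is showing the canonicalized drawing on $U_1\cup V_2$ is weakly isomorphic to $C_{k,k}$, and that step is only gestured at via an unspecified ``winding/continuity argument.'' This is a genuine gap: the iterated pigeonhole on $V_1$ only produces a set $V_2$ whose joint crossing pattern with $U_1$ is \emph{constant} in the pigeonholed sense, not one that is automatically the $C_{k,k}$ pattern; you would still need to enumerate the possible uniform outcomes and rule out the non-$C_{k,k}$ ones, which is exactly the content you defer. A lesser but related gap is that the monotone quantities driving the two Erd\H{o}s--Szekeres extractions on $U$ are never actually defined on the ambient drawing --- you describe only what they would look like if the target structure $C_{k,k}$ were already in hand, which is circular.

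The paper's proof is organized precisely to avoid this. Instead of canonicalizing rotations and then reasoning topologically, it classifies each $4$-tuple $\{u_i,u_j,v_s,v_t\}$ into one of five \emph{order types} (which diagonal pair of edges crosses, with which orientation around the crossing point, or neither crosses) and observes that the four crossing types induce \emph{transitive} ordered graphs on $U$. A four-color monotone-path lemma (this is where $|U|>2(k-1)^4$ enters) then extracts, for every pair $\{v_s,v_t\}$, either a size-$k$ clique in one of types $1$--$4$ or a triangle in type $5$. Ramsey on $V$ makes the extracted $(S,\text{type})$ pair constant across a size-$k$ subset $T$, and the conclusion is immediate: a type-$w$ clique for $w\in\{1,2,3,4\}$ is $C_{k,k}$ \emph{by the very definition of the types}, while the type-$5$ triangle would force a plane drawing of $K_{3,k}$, which cannot exist. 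There is no residual ``twisted'' outcome to exclude and no winding argument; the transitivity observation replaces the topological step you leave open, and this is the idea your sketch is missing.
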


\noindent We suspect that Theorem \ref{main} still holds if $|V|$ is at least single exponential in a power of $k$, rather than double exponential in $\Omega(k\log k)$.  On the other hand, our next result shows that the size of $U\cup V$ needs to be at least exponential in $k/2$.

\begin{proposition}\label{main2}
For every positive integer $k$ there exists a complete simple topological graph on $2^{\lfloor k/2\rfloor}$ vertices that does not contain a topological subgraph weakly isomorphic to $C_{k,k}$.
\end{proposition}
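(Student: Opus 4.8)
\emph{Plan.} I would first replace the statement by a cleaner one about plane paths, and then build the required drawing recursively.

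\emph{Reduction to plane paths.} Recall that in $C_{k,k}$ the edge $u_iv_j$ crosses $u_{i'}v_{j'}$ precisely when $(i-i')(j-j')<0$; in particular no two edges among $u_1v_1,v_1u_2,u_2v_2,v_2u_3,\dots,u_kv_k$ cross, so
\[
u_1v_1u_2v_2\cdots u_kv_k
\]
is a plane path in $C_{k,k}$ using \emph{all} $2k$ of its vertices. Consequently, if a complete simple topological graph $G$ contains a topological subgraph weakly isomorphic to $C_{k,k}$, then $G$ contains a plane path on $2k$ vertices. Since $2k>2\lfloor k/2\rfloor$, it therefore suffices to construct, for every $m\ge 1$, a complete simple topological graph $G_m$ on $2^m$ vertices in which every plane path has at most $3m+3$ vertices: then for $m=\lfloor k/2\rfloor$ the graph $G_m$ has no plane path on $2k$ vertices, hence no copy of $C_{k,k}$. (The values of $k$ with $2^{\lfloor k/2\rfloor}<2k$ — which includes all $k\le 7$ — are vacuous, and one checks directly that $3m+3<2k$ in every non‑vacuous case.)

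\emph{The recursive construction.} Set $N=2^{m-1}$. Take two vertex‑disjoint copies $A,B$ of $G_{m-1}$ drawn inside two disjoint disks $\Delta_A,\Delta_B$, and add the $N^2$ edges of the complete bipartite graph between $A$ and $B$, routed in the complement of $\Delta_A\cup\Delta_B$, so that: (i) every edge between $A$ and $B$ crosses every edge lying inside $\Delta_A$ or inside $\Delta_B$ that is not incident to it, exactly once; (ii) the resulting drawing of $K_{2^m}$ is simple; (iii) the ``bundle'' formed by the $N^2$ new edges, regarded as a simple drawing of $K_{N,N}$, has at most $(m-1)+1=m$ edges on any of its plane paths. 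Property (i) is made possible by carrying through the recursion an auxiliary \emph{accessibility} invariant: each vertex of $G_{m-1}$ can be joined to the boundary of its disk by an arc meeting every non‑incident edge of $G_{m-1}$ exactly once, and such arcs can be chosen so that the bundle edges built from them are pairwise simple. The bundles themselves are produced by a companion recursion (a bipartite analogue of the construction), so that (iii) holds.

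\emph{Bounding plane paths in $G_m$.} Let $P$ be a plane path in $G_m$. If $P$ uses no bundle edge then $P$ lies entirely in $A$ or in $B$, so $P$ has at most $3(m-1)+3$ vertices by induction. Otherwise $P$ uses a bundle edge $e$. First, every maximal run of within‑copy edges of $P$ has length $\le 1$: a run of length $\ge 2$ inside $\Delta_A$ (say) adjacent in $P$ to a bundle edge $e'$ would contain an edge not incident to $e'$'s endpoint in $A$, forcing a crossing with $e'$, contradicting planarity of $P$. Second, $P$ uses at most one within‑$A$ edge and at most one within‑$B$ edge: two within‑$A$ edges would be vertex‑disjoint (by the previous point), and a bundle edge of $P$ adjacent to one of them would be non‑incident to the other and hence cross it. Thus $P$ contains at most two within‑copy edges, which split the bundle edges of $P$ into at most three blocks, each of which is a path in the bundle $K_{N,N}$ all of whose edges are pairwise non‑crossing — i.e.\ a plane path of the bundle — hence of length $\le m$ by (iii). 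Counting: $P$ has at most $3m$ bundle edges and at most $2$ within‑copy edges, so at most $3m+3$ vertices. This completes the induction.

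\emph{Main obstacle.} The conceptual argument above is short; the real work is the construction — simultaneously guaranteeing that the bundle of $N^2$ edges is simple, crosses \emph{all} of the $\Theta(N^2)$ non‑incident edges inside the two disks exactly once, \emph{and} has only $O(\log N)$‑long plane paths. This forces a carefully coordinated double recursion (for the complete drawings $G_m$ and for the bipartite bundles) together with tight control of the rotation systems and of the accessibility arcs, and one must also verify at the outset that a simple drawing of $K_{N,N}$ with logarithmically bounded plane paths exists at all. I expect this bookkeeping, rather than any single clever idea, to be the bulk of the proof.
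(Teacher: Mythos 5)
Your reduction to plane paths is a nice observation and is loosely in the same spirit as one step of the paper's argument (the paper also uses the fact that $C_{k,k}$ has no plane $C_4$). However, your proposal has a genuine gap, and you say so yourself: you never actually exhibit the claimed construction. You require a simple drawing of $K_{2^m}$ in which (i) a complete bipartite ``bundle'' between the two halves crosses \emph{every} non-incident within-half edge exactly once, (ii) the whole drawing is simple, and (iii) the bundle, as a drawing of $K_{N,N}$, has only logarithmically long plane paths. You do not show that such a drawing exists, and establishing (i)+(ii)+(iii) simultaneously via an ``accessibility invariant'' and a companion recursion is exactly the part that would carry the entire proof. As it stands, the path-counting argument you give is conditional on an object you have not built, so the proposal does not constitute a proof.

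The paper avoids all of this with a much more economical probabilistic construction: place $n=2^{\lfloor k/2\rfloor}$ points on the $x$-axis, draw each edge independently as a random upper or lower half-circle, observe that a candidate pair $(S,T)$ can only yield a weakly isomorphic $C_{k,k}$ if it is ``separated'' on the line (otherwise a plane $C_4$ appears), compute that a separated pair succeeds with probability exactly $2^{-k^2+3}$, and conclude via a first-moment bound that the expected number of copies is below $1$. No recursion, no simultaneous crossing constraints, and the bound $2^{\lfloor k/2\rfloor}$ drops out directly from the expectation computation. If you want to salvage your route, the most natural fix is to replace your unspecified bundle by a concrete drawing; but note that even if you could do this, your constraints force bookkeeping far beyond what the statement needs, whereas the random half-circle drawing handles all of it in a few lines. (Also, a small slip: the vacuous range in your reduction is not just $k\le 7$; $k=9$ also has $2^{\lfloor k/2\rfloor}=16<18=2k$.)
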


\noindent Very recently, Balogh, Parada, and Salazar~\cite{balogh2025unavoidable} extended Negami's result to complete multipartite simple topological graphs. Let us remark that our quantitative bound for the bipartite case, i.e.~Theorem~\ref{main}, is particularly interesting, because we can combine it with the celebrated K\H{o}v\'ari-S\'os-Tur\'an theorem to conclude the following statement.

\begin{theorem}\label{main3}
    Every $n$-vertex simple topological graph without a topological subgraph weakly isomorphic to $C_{k,k}$ has at most $O_k(n^{2-\epsilon(k)})$ edges with $\epsilon(k) = (2(k-1)^4+1)^{-1}$.
\end{theorem}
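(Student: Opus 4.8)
The plan is to deduce Theorem \ref{main3} from Theorem \ref{main} together with the K\H{o}v\'ari-S\'os-Tur\'an theorem, with no new geometric input. Set $s = 2(k-1)^4 + 1$ and $t = 2^{k^{5k}}$; both are constants depending only on $k$. Let $G$ be an $n$-vertex simple topological graph with no topological subgraph weakly isomorphic to $C_{k,k}$. The first step is the observation that $G$, viewed as an abstract graph, contains no copy of the complete bipartite graph $K_{s,t}$. Indeed, suppose it did: restrict the drawing of $G$ to the $s + t$ vertices and $st$ edges of such a copy. Since deleting vertices and edges cannot create new crossings, tangencies, or edge--vertex incidences, this restriction is a complete bipartite simple topological graph with parts of size $s > 2(k-1)^4$ and $t \geq 2^{k^{5k}}$. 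By Theorem \ref{main} it contains a topological subgraph weakly isomorphic to $C_{k,k}$, which is then also a topological subgraph of $G$ --- contradicting our hypothesis on $G$.

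The second step is to apply the K\H{o}v\'ari-S\'os-Tur\'an theorem to the abstract graph underlying $G$: an $n$-vertex graph with no $K_{s,t}$ subgraph and $s \leq t$ has at most $\frac12\big((t-1)^{1/s}(n-s+1)n^{1-1/s} + (s-1)n\big) = O_{s,t}\big(n^{2-1/s}\big)$ edges. Substituting our values of $s$ and $t$, and absorbing the dependence on $t$ (however large, it is a fixed function of $k$) into the implied constant, we obtain $|E(G)| = O_k\big(n^{2 - 1/s}\big) = O_k\big(n^{2-\epsilon(k)}\big)$ with $\epsilon(k) = (2(k-1)^4+1)^{-1}$, as claimed.

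There is really no obstacle here beyond bookkeeping: the content of the statement lies entirely in Theorem \ref{main}, which we are permitted to assume. The only points that deserve an explicit sentence are that passing to a subgraph preserves simplicity and the topological-graph axioms, so the extracted $K_{s,t}$ is genuinely a complete bipartite simple topological graph; and that it is the smaller part size $s$, not the astronomically larger $t$, that governs the exponent $2 - \epsilon(k)$, while $t$ only inflates the constant factor hidden in $O_k(\cdot)$.
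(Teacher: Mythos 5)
Your proof is correct and follows exactly the approach the paper indicates: deduce that $G$ has no $K_{s,t}$ subgraph from Theorem~\ref{main}, then apply the K\H{o}v\'ari--S\'os--Tur\'an theorem with the smaller side $s = 2(k-1)^4+1$ controlling the exponent. The paper does not spell out the argument but explicitly states this is the intended derivation, so there is no substantive difference.
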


\noindent It is an interesting open question to determine the optimal value of $\epsilon(k)$ in the theorem above.

\smallskip 

An old conjecture due to Rafla \cite{Raf} predicts that every complete $n$-vertex simple topological graph contains a plane Hamiltonian cycle for $n \geq 3$. This has been verified for small $n$ and is open for $n \geq 10$ (see~\cite{Ab}). The much weaker conjecture of finding a linear size plane matching in a complete $n$-vertex simple topological graph has been extensively studied \cite{S2,Ai,RV}. The best known result for this problem is due to Aichholzer--García--Tejel--Vogtenhuber--Weinberger~\cite{Ai}, who showed the existence of a plane matching of size $\Omega(\sqrt{n})$. Even less is known about plane paths. Recently, it is proved~\cite{Ai,SZ} that every complete $n$-vertex simple topological graph contains a plane path of length $\Omega(\log n/\log\log n)$. Since one can easily find a plane path of length $k$ in $C_{\lceil k/2\rceil,\lceil k/2\rceil}$, we can apply Theorem \ref{main3} to obtain the following corollary.

\begin{corollary}\label{no-plane-k-path}
    Every $n$-vertex simple topological graph not containing a plane path of length $k$ has at most $O_k(n^{2 - 8/k^4})$ edges.
\end{corollary}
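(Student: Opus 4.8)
The plan is to derive the corollary from Theorem~\ref{main3} together with the observation that the square complete bipartite geometric graph already contains short plane paths. Put $m=\lceil k/2\rceil$, and write $u_1,\dots,u_m$ for the vertices of the $U$-side of $C_{m,m}$ in the order they appear on the $y$-axis, and $v_1,\dots,v_m$ for the $V$-side on the line $x=1$. The first step is to record that the zig-zag (``staircase'') path $v_1,u_1,v_2,u_2,\dots,v_m,u_m$ is plane: two chords $u_av_b$ and $u_cv_d$ between these two parallel lines cross precisely when $(a-c)(b-d)<0$, and every edge of the zig-zag path has the form $u_iv_i$ or $u_iv_{i+1}$, so no such product is negative. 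This is a Hamiltonian path of $C_{m,m}$, hence of length $2m-1\ge k$ (for even $k$ one enlarges $C_{m,m}$ by one vertex on each side, a harmless modification that I return to below), and after deleting edges from one end one obtains a plane path of length exactly $k$.

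Next comes the contrapositive. Let $G$ be an $n$-vertex simple topological graph with no plane path of length $k$. If $G$ contained a topological subgraph $H$ weakly isomorphic to $C_{m,m}$, then, because a weak isomorphism sends pairs of non-crossing edges to pairs of non-crossing edges, the path of $H$ corresponding to the zig-zag path of $C_{m,m}$ would be a path all of whose edges are pairwise non-crossing; since planarity of a path depends only on its own edges (the remaining edges of $G$ are irrelevant), this would be a plane path of length $k$ inside $G$, contradicting the hypothesis. Hence $G$ has no topological subgraph weakly isomorphic to $C_{m,m}$, so Theorem~\ref{main3} applies and gives $|E(G)| = O_k\!\bigl(n^{\,2-\epsilon(m)}\bigr)$ with $\epsilon(m)=\bigl(2(m-1)^4+1\bigr)^{-1}$.

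It remains to check the arithmetic $\epsilon(m)\ge 8/k^4$. Since $m-1=\lceil k/2\rceil-1\le (k-1)/2$, we get $2(m-1)^4+1\le (k-1)^4/8+1$, so it suffices that $(k-1)^4+8\le k^4$, i.e. $4k^3-6k^2+4k-1\ge 8$, which holds for every $k\ge 2$. Substituting into the bound from Theorem~\ref{main3} yields $|E(G)| = O_k(n^{2-8/k^4})$, as claimed.

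There is no real obstacle in this step: the substance of the result lies entirely in Theorems~\ref{main} and~\ref{main3}, and what is left is essentially bookkeeping. The three points that deserve a moment's attention are the (routine) verification that the zig-zag path is plane via the crossing rule for chords between parallel lines; the observation that it suffices to have a weak isomorphism onto a topological \emph{subgraph} of $G$, since a path brings its own planarity with it; and the parity handling — the zig-zag path in $C_{m,m}$ has length $2m-1$, which is exactly $k$ when $k$ is odd but only $k-1$ when $k$ is even, so for even $k$ one works with the slightly larger $C_{k/2+1,\,k/2+1}$ and absorbs the resulting lower-order change in the exponent.
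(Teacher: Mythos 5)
Your approach is exactly the paper's: the paper disposes of this corollary with the single remark that $C_{\lceil k/2\rceil,\lceil k/2\rceil}$ contains a plane path of length $k$, after which Theorem~\ref{main3} applies. Your fleshed-out version of the same idea — the zig-zag Hamiltonian path, the crossing rule $(a-c)(b-d)<0$ for chords between parallel lines, and the observation that a weak isomorphism preserves non-crossing pairs so a crossing-free path in $C_{m,m}$ pulls back to a plane path in $G$ — is correct and complete for odd $k$, where $m=\lceil k/2\rceil=(k+1)/2$ gives a Hamiltonian plane path of length $2m-1=k$ and the inequality $(k-1)^4+8\le k^4$ (valid for $k\ge 2$) yields $\epsilon(m)\ge 8/k^4$.

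You were also right to flag the parity problem, which the paper glosses over: for even $k$, $C_{k/2,k/2}$ has only $k$ vertices and therefore cannot contain a path of length $k$ at all. But your parenthetical fix does not actually close the gap. Replacing $C_{k/2,k/2}$ by $C_{k/2+1,\,k/2+1}$ and invoking Theorem~\ref{main3} with parameter $k'=k/2+1$ gives
\[
\epsilon(k')=\bigl(2(k/2)^4+1\bigr)^{-1}=\frac{8}{k^4+8}<\frac{8}{k^4},
\]
so the resulting bound is $O_k\!\bigl(n^{2-8/(k^4+8)}\bigr)$, whose exponent on $n$ is strictly larger than $2-8/k^4$. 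A discrepancy in the exponent of $n$ cannot be ``absorbed'' into an $O_k$ constant: for fixed even $k\ge 4$, $n^{2-8/(k^4+8)}$ is not $O\bigl(n^{2-8/k^4}\bigr)$ as $n\to\infty$. So for even $k$ the argument establishes a marginally weaker statement than what Corollary~\ref{no-plane-k-path} claims. This is an inaccuracy inherited from the paper rather than one you introduced, but it is a real gap as written; to make it airtight one should either state the exponent as $8/(k^4+8)$ (or, uniformly in parity, as $8/(k+1)^4$, which the same computation justifies for both parities), or give a separate reason why the cleaner $8/k^4$ is still valid.
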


\smallskip 

Next, we concentrate on the case of Corollary~\ref{no-plane-k-path} when $k=3$, that is, on simple topological graphs with no plane path of length $3$. Such graph drawings can be regarded as a local variant of the so-called thrackles. A \textit{thrackle} is a simple topological graph such that every pair of edges that do not share a vertex must cross. The famous thrackle conjecture of Conway states that a thrackle with $n$ vertices has at most $n$ edges, see \cite{PachSterling} for a detailed history of the problem. For the special case of geometric graphs, the thrackle conjecture is proved by Erd\H{o}s, and later, a short proof is given by Perles (see \cite{PachSterling}). The proof of Perles also works if the drawing is outerplanar \cite{cairns2012outerplanar}, i.e., if the points lie on the boundary of a disk and the edges lie inside this disk. The case that the edges are $x$-monotone is settled by Pach and Sterling~\cite{PachSterling}. In general, after a long series of improvements  (Lov\'asz--Pach--Szegedy \cite{lovasz1997conway} proved $2n$, Cairns--Nikolayevsky \cite{cairns2000bounds} proved $1.5 n$, improved further by Fulek--Pach \cite{fulek2011computational,fulek2019thrackles} and Goddyn--Xu \cite{goddyn2017bounds}), the current best upper bound on the number of edges of an $n$-vertex thrackle is $1.393(n-1)$ by Xu \cite{xu}. Apart from improvements on the upper bound, several papers consider variations of thrackles, in particular generalized thrackles \cite{cairns2000bounds}, superthrackles \cite{superthrackles}, thrackles on surfaces \cite{thracklesnonplanarsurfaces}, and thrackles of convex sets \cite{asada2018reay, keszegh2023convexhullthrackles}. 

We call a simple topological graph with no plane path of length $3$ a \textit{local thrackle}. Note that every thrackle is a local thrackle, but the two notions do differ.  For example, consider the graph $G$ on 6 vertices and 7 edges obtained from two disjoint copies of $K_3$ and an edge between them.  Then $G$ does not have a thrackle drawing (see, e.g., \cite{woodall1971thrackles}), but $G$ does have a local thrackle drawing (see the left-most graph in Figure~\ref{fig:2triangles}).  However, our next result shows that the number of edges in a local thrackle with straight-line edges cannot be larger than $n$, just like in the case of thrackles. 

\begin{proposition}\label{lt:geometric}
   A local thrackle on $n$ vertices and with straight-line segments as edges has at most $n$ edges.
\end{proposition}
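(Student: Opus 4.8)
The plan is to show that a geometric local thrackle $G$ is a pseudoforest, i.e.\ every connected component contains at most one cycle; this gives $|E(G)|\le|V(G)|$ at once, exactly as in Erd\H{o}s's argument for geometric thrackles. Equivalently, I must rule out the three minimal connected subgraphs having one more edge than vertices: a \emph{theta graph} (two vertices joined by three internally vertex-disjoint paths), a \emph{figure-eight} (two cycles meeting in a single vertex), and a \emph{dumbbell} (two vertex-disjoint cycles joined by a path). The only tool is the defining condition in local form: on any path $a-u-v-b$ with four distinct vertices, the end-edges $ua$ and $vb$ are the only pair that can cross, so the absence of a plane path of length three forces $ua$ to cross $vb$. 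In particular, for every cycle $C=v_0v_1\cdots v_{m-1}$ with edges $e_i=v_iv_{i+1}$ (indices mod $m$), the edge $e_i$ must cross $e_{i+2}$ for all $i$.

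First I would kill $4$-cycles: a geometric $4$-cycle cannot have both of its opposite-edge pairs crossing, so it always contains a crossing-free three-edge subpath; hence $G$ has no $C_4$, which already eliminates $\Theta_{1,2,2}=K_4-e$, $\Theta_{2,2,2}=K_{2,3}$, and every theta graph with a $4$-cycle. The crucial step is to prove that $G$ has no even cycle at all. Traversing $C$ with the forced crossings $e_i\times e_{i+2}$, one checks that the turning direction cannot reverse at two consecutive vertices --- if the two turns along a length-three subpath had opposite orientations, its first and last edges would point ``the same way'' and could not cross --- so $C$ is a closed polygon of constant turning direction with a winding number $w\ge 2$ (it is not $1$, since a convex polygon has no crossings). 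Bookkeeping how the crossings $e_i\times e_{i+2}$ distribute relative to the winding then forces $m$ to be odd. Since in every theta graph the three cycle lengths sum to an even number, at least one of them is even, so the absence of even cycles also removes all theta subgraphs.

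Figure-eights and dumbbells I would exclude by a direct computation at a branch vertex, modelled on the smallest case of two triangles sharing a vertex $v$, with outer vertices $a_1,a_2$ and $b_1,b_2$. Here the four length-three paths through $v$ force the crossings $a_1a_2\times vb_1$, $a_1a_2\times vb_2$, $b_1b_2\times va_1$, and $b_1b_2\times va_2$. The first two place $b_1$ and $b_2$ on the far side of the line $a_1a_2$ from $v$, so the whole segment $b_1b_2$ lies there; symmetrically the segment $a_1a_2$ lies on the far side of the line $b_1b_2$ from $v$. But then the point where $vb_1$ crosses $a_1a_2$ would lie on the far side of the line $b_1b_2$ from $v$ --- impossible, since $vb_1$ joins $v$ to a point of the line $b_1b_2$ and hence never leaves its near side. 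The general figure-eight and the dumbbell lead to the same contradiction, applied to the two cycle-edges at a branch vertex together with the adjacent edge of the other cycle or of the connecting path.

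The main obstacle is clearly the ``no even cycle'' step. The turning-direction/winding-number heuristic must be made rigorous, and it cannot be replaced by a plain crossing count or a winding-number parity, since closed polygons of constant turning direction and winding number at least $2$ do occur on an even number of vertices; what must be exploited is that \emph{all} of the crossings $e_i\times e_{i+2}$ hold simultaneously. The figure-eight/dumbbell analysis is a secondary obstacle --- routine in spirit, but one has to carry the argument through for arbitrary path lengths rather than triangles only.
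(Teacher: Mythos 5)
Your ``crucial step''---that a geometric local thrackle contains no even cycle at all---is false, not merely hard to make rigorous. The paper proves a full characterization: a graph is geometric local thrackleable if and only if it is a disjoint union of caterpillars, odd spiked cycles, and \emph{even spiked cycles of length at least~$8$}. In particular $C_8, C_{10}, C_{12},\dots$ all admit straight-line local thrackle drawings (take a regular $n$-gon and join $v_i$ to $v_{i+m}$ with $m=n/2-1$ or $n/2-2$, whichever is odd; for $m\neq 10$ the condition $3m>n$ makes consecutive-but-one edges cross). These are exactly the ``closed polygons of constant turning direction and winding number at least~$2$ on an even number of vertices'' you flag as a worry, and all the crossings $e_i\times e_{i+2}$ do hold simultaneously there, so no parity contradiction is available. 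Consequently your route to excluding theta graphs (via ``three cycle lengths sum to even, so one is even'') collapses. The conclusion that $G$ is a pseudoforest is still true, but it needs a different argument.

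The paper's argument is more local and does not touch cycle parity at all. The key lemma is: in a geometric local thrackle no vertex can have three non-leaf neighbors. (If $v$ has three non-leaf neighbors, then in the straight-line drawing one of them, say $w$, ``splits'' the angular neighborhood of $v$; since $w$ has another neighbor $w'\ne v$, the edge $ww'$ must intersect every edge at $v$, which is impossible once $vw$ splits the fan at $v$.) Deleting all leaves then leaves a graph of maximum degree at most~$2$, i.e.\ a disjoint union of paths and cycles, so $G$ itself is a disjoint union of caterpillars and spiked cycles---hence a pseudoforest, giving $|E|\le n$ immediately. Note that this single lemma already kills theta graphs, figure-eights, and dumbbells (each has a vertex with three non-leaf neighbors), so your separate branch-vertex computation---correct in spirit for the two-triangle case---is unnecessary once the lemma is in hand. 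If you want to salvage your outline, replace the ``no even cycle'' step with this degree lemma and the rest of the pseudoforest bookkeeping goes through.
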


 \noindent Our proof in fact gives a characterization of the graphs that have a local thrackle drawing with segments. On the other hand, already with $x$-monotone edges one can draw a local thrackle with more than $n$ edges, unlike in the case of thrackles.

\begin{proposition}\label{lt:xmon}
    A local thrackle on $n$ vertices and with $x$-monotone edges has at most $\frac{4}{3}n$ edges, while there exist local thrackles on $n$ vertices with $x$-monotone edges and with $\frac{6}{5}n-O(1)$ edges. 
\end{proposition}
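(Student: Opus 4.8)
Throughout, order the vertices $v_1,\dots,v_n$ by increasing $x$-coordinate.

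\smallskip
\noindent\textbf{Upper bound.} The plan is to extract a rigid combinatorial structure from the ``no plane $P_4$'' condition and then count. First I would prove a \emph{no-straddle lemma}: if $v_iv_j\in E$ with $i<j$, then it is impossible that $v_i$ has a neighbor to its left and $v_j$ has a neighbor to its right, since if $v_k\sim v_i$ ($k<i$) and $v_\ell\sim v_j$ ($\ell>j$), then $v_kv_iv_jv_\ell$ is a path of length $3$ whose two end-edges have disjoint $x$-projections $[x_k,x_i]$ and $[x_j,x_\ell]$ and therefore, being $x$-monotone arcs, cannot cross — a plane path of length $3$. Partition $V$ into $A$ (vertices that are the left endpoint of every incident edge), $B$ (right endpoint of every incident edge), and $C$ (all others; every vertex of $C$ has degree at least $2$). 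Since a left endpoint is never in $B$ and a right endpoint never in $A$, the no-straddle lemma shows that every edge joins two different classes; in particular $A,B,C$ are independent sets. Next I would prove a \emph{uniqueness lemma}: each vertex of $A$ has at most one neighbor in $C$, and symmetrically for $B$. Indeed, if $a\in A$ had $C$-neighbors $v,v'$ with $x_v<x_{v'}$, pick a right-neighbor $b'$ of $v'$; then for every neighbor $w\ne v'$ of $a$, $wav'b'$ is a path of length $3$, so the $x$-monotone arcs $aw$ and $v'b'$ must cross, which forces $x_w>x_{v'}$ — impossible for $w=v$. Hence $|E(A,C)|\le|A|$, $|E(B,C)|\le|B|$, and $2|C|\le|E(A,C)|+|E(B,C)|\le|A|+|B|=n-|C|$, so $|C|\le n/3$.

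The remaining and hardest step is to bound $|E(A,B)|$ and assemble the pieces into $|E|\le\frac43 n$ (equivalently $\sum_v\deg(v)\le\frac83 n$). This needs further mileage from $x$-monotonicity via more plane-$P_4$ arguments of the same flavor — for instance an $A$-vertex with a $C$-neighbor has all its other neighbors on one side of that $C$-neighbor, and if $A$-vertices $a<a'$ share a neighbor then $a$'s whole neighborhood lies to the right of $a'$ — which act as ``interval'' constraints preventing $E(A,B)$ from being dense unless $A$ or $B$ is small, while a large $C$ keeps $A\cup B$ small. I would package this as a discharging argument on the tripartition: assign $v$ the charge $\deg(v)-\frac83$, and show that the plentiful negative charge on degree-$\le2$ vertices (all of $C$, together with the low-degree neighbors forced around any high-degree $A$- or $B$-vertex) suffices to cancel the positive charge, using the structural lemmas to locate enough nearby recipients. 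Checking that this redistribution always balances is the delicate part; a weaker linear bound is more accessible, following from $|E|\le|E(A,B)|+|A|+|B|$ once $|E(A,B)|=O(n)$ is in hand.

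\smallskip
\noindent\textbf{Lower bound.} Here I would exhibit an explicit infinite family of $x$-monotone local thrackles of edge density tending to $6/5$, and verify the property by listing the finitely many types of length-$3$ path. Small disjoint gadgets on $5$ vertices and $6$ edges do not suffice: among connected such graphs all but the bowtie force the full thrackle condition on a $5$-vertex $6$-edge graph, and the bowtie itself admits no $x$-monotone local thrackle drawing (its four forced crossings cannot all be realized once the $x$-order is fixed, and the $x$-order is forced up to symmetry). So the construction must be larger or connected. A natural candidate is a long ``spine'' that is itself $x$-monotonically thrackleable in the weak sense needed here — a path or odd cycle drawn as a zig-zag, so that only consecutive-ish edges cross — with a constant-size pendant gadget, a triangle joined to the spine by a single edge, attached at every other spine vertex; a spine of $m$ vertices then yields $\sim\frac52 m$ vertices and $\sim 3m$ edges, i.e.\ density $\frac65-O(1/m)$. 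Each triangle is drawn as a tiny $x$-monotone ``spike'' positioned so that each triangle edge and each connecting edge crosses precisely those spine edges and neighboring connectors that lie on a common length-$3$ path with it. The genuinely hard point is to choose the spine drawing and the spike placement so that \emph{all} the required crossings are simultaneously realizable in a simple drawing; once one valid local configuration is found it simply repeats along the spine.
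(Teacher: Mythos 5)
Your upper-bound argument recovers exactly the paper's three-way partition (your $A$, $B$, $C$ are the paper's left-, right-, and middle-vertices), and your no-straddle lemma, the independence of $A$, $B$, $C$, and your uniqueness lemma are all correct and in the spirit of the structural claims made in the paper. But the proof is genuinely incomplete where it matters: you bound $|C|\le n/3$, yet as you yourself point out you have no bound on $|E(A,B)|$, and the discharging argument you gesture at is never carried out. The missing device in the paper is a short edge-deletion: at each $A$-vertex delete the incident edge that leaves highest, and at each $B$-vertex the incident edge that leaves lowest, removing at most $|A|+|B|$ edges. After this deletion no $AB$ edge can survive, since such an edge together with the two deleted extremal edges at its endpoints would form a plane $3$-path --- the deleted edge at the $A$-endpoint stays above the surviving edge and the deleted edge at the $B$-endpoint stays below it (by $x$-monotonicity and simplicity), so the two cannot cross. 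The surviving $AC$ and $BC$ edges form matchings (degree at most one on both sides, by arguments close to your uniqueness lemma), and there are no $CC$ edges. Altogether $|E|\le |A|+|B|+\min(|A|,|C|)+\min(|B|,|C|)\le n+\min(|A|,|B|,|C|)\le \frac{4}{3}n$. Without some device of this kind, your structural lemmas, while correct, do not combine into the claimed bound.

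Your lower-bound sketch is also unfinished, and moreover differs from the paper's construction. The paper draws two parallel copies of an $x$-monotone local-thrackle path and joins every second vertex of one path to the corresponding vertex of the other by a $2$-path (one new midpoint plus two edges); each period adds $5$ vertices and $6$ edges, and the local-thrackle property is checked by inspecting a single period. You instead hang a pendant triangle off every other spine vertex by a single connector edge. The vertex/edge count comes out the same, but you exhibit no drawing and concede that realizing all forced crossings simultaneously is ``the genuinely hard point.'' That worry is justified: a triangle imposes several simultaneous crossing demands on its edges (each triangle edge must cross the relevant non-adjacent edge of every $3$-path through the connector while avoiding its triangle-mates and remaining $x$-monotone), whereas the paper's per-period gadget is acyclic and far easier to embed. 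As it stands, neither half of your argument proves the proposition.
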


\noindent For the general case, we have a slightly better construction and an upper bound which improves the bound that comes from Corollary~\ref{no-plane-k-path}.

\begin{proposition}\label{lt:general}
    A local thrackle on $n$ vertices has at most $O(n^{4/3})$ edges, while there exist local thrackles on $n$ vertices and with $2n-O(\sqrt n)$ edges. 
\end{proposition}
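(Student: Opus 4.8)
The plan is to treat the upper bound and the construction separately.

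\smallskip

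For the upper bound I would begin with the elementary reformulation that a simple drawing is a local thrackle if and only if for every path $p_1p_2p_3p_4$ on four distinct vertices the end edges $p_1p_2$ and $p_3p_4$ cross (the pairs $p_1p_2,p_2p_3$ and $p_2p_3,p_3p_4$ share an endpoint, hence do not cross in a simple drawing). The first key step is to prove that the abstract graph underlying a local thrackle is $C_4$-free. Indeed, if four vertices span a cycle $a,b,c,d$, then applying the reformulation to $a$--$b$--$c$--$d$ and to $b$--$c$--$d$--$a$ forces $ab$ to cross $cd$ and $bc$ to cross $da$; the remaining four pairs of edges of this $C_4$ share endpoints, so the closed curve traced by $a\to b\to c\to d\to a$ has exactly these two self-crossings, and along the traversal they appear in the cyclic order $p,q,p,q$, i.e.\ they interlace. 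The classical parity condition for Gauss codes of planar closed curves (each self-crossing must interlace an even number of the others) then gives a contradiction. With $G$ now $C_4$-free, K\H{o}v\'ari--S\'os--Tur\'an immediately yields $|E(G)|=O(n^{3/2})$.

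\smallskip

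To sharpen this to $O(n^{4/3})$ I would use the drawing more heavily. Since $G$ is $C_4$-free, for an edge $wx$ the sets $N(v)\setminus\{w\}$, $v\in N(w)$, are pairwise disjoint, and (as $w$--$v$--$v'$--$x$--$w$ would be a $C_4$) the edges meeting $N(w)$ and those meeting $N(x)$ form essentially disjoint families; but the local thrackle condition forces $wx$ to cross \emph{all} of them, so $wx$ crosses a collection of about $\sum_{v\in N(w)}d_v+\sum_{v\in N(x)}d_v$ pairwise distinct edges, organized into pairwise almost-disjoint stars. The plan is then to bound, using that the edges of $G$ form a simple drawing (a family of curves pairwise crossing at most once), how many edges from such star-bundles a single edge can cross, and to feed the resulting inequality on $\sum_v d_v^2$ back into the K\H{o}v\'ari--S\'os--Tur\'an estimate to obtain $|E(G)|=O(n^{4/3})$. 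This last step is where I expect the main difficulty: turning ``every edge is forced to meet many well-structured bundles'' into a genuinely sub-$n^{3/2}$ bound requires a careful interlacement or cutting estimate for the curve arrangement, not just the crude bound that an edge crosses at most $|E(G)|-1$ others. (One could instead hope to forbid a fixed theta graph $\Theta_{s,3}$ and invoke $\mathrm{ex}(n,\Theta_{s,3})=O(n^{4/3})$, but certifying that such a subgraph is topologically unrealizable looks at least as hard as the $C_4$ argument applied to several interlocking cycles at once; note that the Gauss-code parity condition alone does \emph{not} rule out a single $C_6$.)

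\smallskip

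For the lower bound I would give an explicit construction. Because $C_4$-freeness is now known to be necessary, grid-like, book-like, and $K_{2,t}$-like drawings are all excluded, so the basic building block must be a $C_4$-free graph that admits a local thrackle drawing with edge-to-vertex ratio close to $2$. I would take $\Theta(\sqrt n)$ copies of such a block (on $\Theta(\sqrt n)$ vertices each), place them in the plane, and route a set of $\Theta(\sqrt n)$ linking edges between consecutive blocks so that every path of length $3$ -- including one that straddles two blocks -- has crossing end edges; the $-O(\sqrt n)$ deficit accounts for the edges that must be dropped at the interfaces to keep the drawing simple and $C_4$-free. The two points that need care are designing the block so that its fixed drawing realizes all the required crossings while staying simple, and verifying composability, i.e.\ that no plane $P_3$ is accidentally created by a path crossing from one block into another.
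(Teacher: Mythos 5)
Your $C_4$-freeness argument via Gauss interlacement parity is correct (and a pleasant alternative to citing Woodall), but it only yields $|E(G)|=O(n^{3/2})$, and the proposed sharpening to $O(n^{4/3})$ is precisely the step you acknowledge you cannot complete. As written, the upper bound proof therefore stops short of the claimed exponent. The route you dismiss in the parenthetical remark -- forbid $\Theta_3$ and use $\mathrm{ex}(n,\Theta_3)=O(n^{4/3})$ -- is in fact the one the paper takes, and your worry that ``certifying a $\Theta_3$ is topologically unrealizable looks at least as hard'' is resolved differently than you anticipate: the paper does not run a Gauss-code argument on several interlocking cycles. Instead it combines two facts: (a) $\Theta_3$ is not thrackleable (Lov\'asz--Pach--Szegedy, Theorem~5.1 of~\cite{lovasz1997conway}); and (b) any local thrackle drawing of $C_6$ is already a thrackle drawing, a statement verified by an exhaustive computer search over all crossing patterns consistent with the local thrackle constraints, using the Fulek--Pach planarization/bracing criterion. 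Granting (a) and (b), if a local thrackle contained $\Theta_3$, it cannot be a thrackle, so some two independent edges fail to cross; those two edges are opposite edges of one of the three $6$-cycles inside $\Theta_3$, giving a local thrackle drawing of $C_6$ that is not a thrackle, contradicting (b). You correctly observe that the parity condition alone does not rule out a single $C_6$; the missing idea is that one can reduce $\Theta_3$ to $C_6$ and then settle $C_6$ by a finite (computer-assisted) case analysis rather than by hand.

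For the lower bound your sketch is not a construction: it posits a ``block that admits a local thrackle drawing with edge-to-vertex ratio close to $2$'' without exhibiting one, which is circular since such a block is essentially the object to be produced. The paper's construction is the $1$-subdivision of $K_{n,n}$, which has $n^2+2n$ vertices and $2n^2$ edges (ratio $\to 2$, $C_4$-free with girth $8$), drawn as follows: first draw the $1$-subdivision of a star $S_n$ as a local thrackle by bundling the spoke $2$-paths so that they all cross pairwise in a tiny neighborhood of the subdivision vertices, with the cyclic order at the center reversed at the leaves; then overlay $n$ such subdivided stars centered at $v_1,\dots,v_n$ on the common leaf set $W$, nesting the new bundles tightly around the old ones so that every $3$-edge path, which necessarily has its middle at some $v_i$ or $w_j$, lies inside one of the (already-verified) star drawings. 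Your outer-level blueprint of ``$\Theta(\sqrt n)$ blocks joined by linking edges'' does not match this and leaves the hard part (producing any local-thrackleable $C_4$-free graph with ratio near $2$) unaddressed.
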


\noindent Notice that the lower bound construction for the general case has more edges than the upper bound for the $x$-monotone case.

\smallskip
\noindent\textbf{Avoiding self-intersecting paths.}
A \textit{dual} version of Proposition~\ref{lt:xmon} was obtained by Pach, Pinchasi, Tardos, and T\'oth in \cite{PPTT}, who proved that every $n$-vertex $x$-monotone simple topological graph with no self-intersecting path of length 3 has at most $O(n\log n)$ edges.  Moreover, they showed that this bound is asymptotically tight.  For fixed $k \geq 3$, a construction due to Tardos \cite{T} shows that there are geometric graphs with no self-intersecting path of length $k$ and with a superlinear number of edges. Pach, Pinchasi, Tardos, and T\'oth in \cite{PPTT} also showed that every $n$-vertex topological graph (not necessarily simple) with no self-intersecting path of length 3 has at most $O(n^{3/2})$ edges.  In Section \ref{sec:4}, we give a short proof of the following variant.

\begin{theorem}\label{thm:selfcross}
Every $n$-vertex simple topological graph $G$ with no self-intersecting path of length 4 has at most $O(n\log^2 n)$ edges.
\end{theorem}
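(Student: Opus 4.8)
The plan is a minimum-degree reduction followed by extracting strong structural consequences of the forbidden configuration that become available \emph{because the drawing is simple}.

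First, it suffices to prove that every $N$-vertex simple topological graph with no self-intersecting path of length $4$ has a vertex of degree $O(\log^2 N)$; deleting such a vertex and iterating (each induced subgraph still has no such path) then yields $|E(G)|=O(n\log^2 n)$. So suppose for contradiction that $G$ has minimum degree $\delta>C\log^2 N$ for a large constant $C$. Since $\delta\ge 5$, the hypothesis upgrades to ``no self-intersecting path of length $3$'': a self-intersecting path $v_0v_1v_2v_3$ (so the edges $v_0v_1$ and $v_2v_3$ cross) could be extended through a neighbor of $v_3$ outside $\{v_0,v_1,v_2\}$ to a self-intersecting path of length $4$.

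Next I establish the structural facts. Suppose edges $e=uv$ and $f=xy$ cross. Using simplicity (edges sharing an endpoint do not cross), $u,v,x,y$ are distinct. If $xu$ were an edge, then $y\,x\,u\,v$ would be a path of length $3$ whose first edge ($f$) and last edge ($e$) cross, which is excluded; symmetrically none of $xu,xv,yu,yv$ is an edge, so $\{u,v,x,y\}$ spans only $e$ and $f$. Moreover $u$ and $x$ have no common neighbor: such a $c$, necessarily outside $\{u,v,x,y\}$ by the previous sentence, would give the length-$4$ path $v\,u\,c\,x\,y$ whose first edge $e$ and last edge $f$ cross. By symmetry no vertex of $\{u,v\}$ shares a neighbor with a vertex of $\{x,y\}$, so the \emph{closed edge-neighborhoods} $W(e):=N[u]\cup N[v]$ and $W(f):=N[x]\cup N[y]$ are disjoint. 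Two corollaries follow: each edge $e$ is crossed only by edges with both endpoints outside $W(e)$ (a set missing at least $\delta+1$ vertices), and — the decisive one — for every vertex $w$, all edges incident to $N[w]$ are pairwise non-crossing, hence form a plane graph, hence number at most $3N$. Applied to a minimum-degree vertex $w$ this already forces $\binom{\delta+1}{2}\le 3N$, i.e.\ $\delta=O(\sqrt N)$, recovering an $O(n^{3/2})$ bound in the spirit of Pach--Pinchasi--Tardos--T\'oth.

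The improvement from $O(n^{3/2})$ to $O(n\log^2 n)$ is the heart of the argument and, I expect, the main obstacle. The idea is to bootstrap the plane-graph fact by a recursive decomposition: repeatedly peel off the closed neighborhood $N[w]$ of a current maximum-degree vertex $w$; each peeling removes at most $3N$ edges (the plane graph of edges incident to $N[w]$, which in particular shows $G[N[w]]$ is itself planar) while deleting at least $\delta$ vertices, and the plane skeleton formed by those edges cuts the remaining drawing into pieces that can be recursed upon. One must then argue that (a) a generic edge of $G$ crosses the boundary of the current region only $O(1)$ times, so it is charged to few pieces, and (b) the resulting recursion, of the shape $T(N)\le\sum_i T(N_i)+O(N\log N)$ with $\sum_i N_i\le N$ and each $N_i\le N-\delta$, telescopes: the per-level $O(N\log N)$ term — obtained from the disjointness of edge-neighborhoods, in the role that $x$-monotonicity plays in the Pach--Pinchasi--Tardos--T\'oth bound — together with the $O(\log N)$ recursion depth combine to $O(N\log^2 N)$, contradicting $|E(G)|\ge\delta N/2$. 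Making precise how the plane skeleton confines the rest of the drawing, and verifying that the losses telescope rather than accumulate, is the delicate point; the structural disjointness of edge-neighborhoods derived above is precisely what should make it work.
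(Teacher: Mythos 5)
Your first several steps are correct and, I think, genuinely interesting. The minimum-degree reduction is standard. The upgrade to ``no self-intersecting path of length 3'' once $\delta\ge 5$ is valid (a crossing pair $v_0v_1$, $v_2v_3$ always extends via a fresh neighbor of $v_3$). The derived disjointness of closed edge-neighborhoods of crossing edges, and hence the fact that all edges incident to $N[w]$ form a plane graph, are clean structural consequences that do recover the $O(n^{3/2})$ bound. None of this appears in the paper; the paper instead passes to a bipartite subgraph, applies a decomposition lemma of Fox--Pach--Suk to extract $\Omega(|E|/\log|E|)$ edges partitioned into groups each dominated by a single crossing edge, and bounds each group by $O(|V_i|\log|V_i|)$ via a forbidden-submatrix lemma of Pach--Pinchasi--Tardos--T\'oth. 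So you are on a genuinely different route.

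The problem is that your proof stops being a proof exactly at the step you flag as ``the heart of the argument.'' The recursion $T(N)\le\sum_i T(N_i)+O(N\log N)$ is not established, only asserted: you do not show where the per-level $O(N\log N)$ saving comes from, nor that a typical edge crosses the plane skeleton boundary $O(1)$ times. More seriously, the claimed $O(\log N)$ recursion depth does not follow from what you state. You only guarantee $N_i\le N-\delta$ and $\sum_i N_i\le N$; with $\delta=\Theta(\log^2 N)$, peeling $\delta$ vertices per step allows depth up to $\Theta(N/\log^2 N)$, and there is no argument that the plane skeleton splits the remaining graph into pieces that are a constant factor smaller. Without a balanced-splitting guarantee the proposed recurrence does not telescope to $O(N\log^2 N)$, and this is precisely where the nontrivial content of the theorem lives. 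As written, the proposal proves $O(n^{3/2})$ but not the stated bound.
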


Our paper is organized as follows. In the next section, we prove Theorem~\ref{main} and Proposition~\ref{main2}. In Section~\ref{sec:local}, we establish our results on local thrackles, proving Propositions~\ref{lt:geometric}, \ref{lt:xmon}, and \ref{lt:general}. In Section \ref{sec:4} we prove Theorem \ref{thm:selfcross}. Finally, in Section \ref{sec:remarks}, we give some concluding remarks.  We systematically omit floors and ceilings whenever they are not crucial for the sake of clarity in our presentation. All logarithms are in base 2.

\section{Unavoidable patterns in complete bipartite topological graphs}

In this section, we prove Theorem \ref{main}.  We will need the following combinatorial lemmas.  Let $G$ be an ordered graph with vertex set $U = \{u_1,u_2,\ldots,u_n\}$.  We say that $G$ contains a \textit{monotone path} of length $k$ if there are $k +1$ indices $1\le i_0 < i_1<\cdots < i_k\le n$ such that $u_{i_j}u_{i_{j+1}} \in E(G)$ for all $0\leq i< k$.  We say that $G$ is \textit{transitive} if for any $1\le i_1 < i_2 < i_3\le n$, the condition $u_{i_1}u_{i_2},u_{i_2}u_{i_3} \in E(G)$ implies that $u_{i_1}u_{i_3} \in E(G)$. We will need the following three simple observations. The first is self evident, for the second we provide a simple proof to be self contained. The last one is a well known estimate of the multicolor Ramsey numbers.

\begin{lemma}\label{lemtrans}
     The vertices of a monotone path induce a clique in a transitive ordered graph.
\end{lemma}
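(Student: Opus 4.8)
The plan is to prove the stronger, more explicit statement that any two vertices appearing on a monotone path of a transitive ordered graph are adjacent; since the vertex set of such a path of length $k$ is exactly $\{u_{i_0}, u_{i_1}, \ldots, u_{i_k}\}$ for some strictly increasing indices $i_0 < i_1 < \cdots < i_k$ with $u_{i_j}u_{i_{j+1}} \in E(G)$ for all $0 \le j < k$, establishing this for every pair immediately yields that these $k+1$ vertices induce a clique. So the whole task reduces to showing $u_{i_a}u_{i_b} \in E(G)$ whenever $0 \le a < b \le k$.

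I would do this by induction on the gap $b - a$. The base case $b - a = 1$ is precisely the hypothesis that consecutive vertices of the path are joined by an edge. For the inductive step, assume $b - a \ge 2$ and that the claim holds for all smaller gaps. Then $u_{i_a}u_{i_{b-1}} \in E(G)$ by the induction hypothesis applied to the gap $b-1-a$, while $u_{i_{b-1}}u_{i_b} \in E(G)$ by assumption. Because $i_a < i_{b-1} < i_b$, the defining property of a transitive ordered graph applied to this ordered triple gives $u_{i_a}u_{i_b} \in E(G)$, which closes the induction.

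There is essentially no obstacle here; the only point that requires a moment's care is that the indices are strictly increasing along the path, so that transitivity is always invoked with the three vertices in the correct left-to-right order. This is why the statement is genuinely self-evident, and the argument above can be collapsed to a single sentence ("repeatedly apply transitivity along the path") whenever a more compact exposition is preferred.
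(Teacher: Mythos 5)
Your induction on the gap $b-a$ is correct and is exactly the argument the paper has in mind when it calls this lemma ``self evident'' and omits the proof. There is nothing to add; the repeated application of transitivity along the path, taking care that the indices stay in increasing order, is the whole content.
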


\begin{lemma}\label{lemES}
    Let $t$ and $m$ be positive integers. If $G$ is a complete ordered graph on $n>tm^4$ vertices whose edges are colored with five colors, then either there exists an $m$-edge monochromatic monotone path in $G$ in one of the first four colors, or there exists a monochromatic clique of size $t+1$ in the fifth color.
\end{lemma}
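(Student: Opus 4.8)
The plan is to prove Lemma~\ref{lemES} by combining a dichotomy between long monotone paths and large "stable" sets with the standard multicolor Ramsey bound. First I would recall the Erd\H{o}s--Szekeres-type fact: in any ordered graph on $N$ vertices in which the edges are colored with $r$ colors and which contains no monochromatic monotone path of length $m$ in any of these $r$ colors, one can find a set of more than $N^{1/(m^r)}$ vertices that is "monochromatically monotone-path-free in a strong sense"---more precisely, for each vertex assign the $r$-tuple recording, for each color $c$, the length of the longest monochromatic monotone path in color $c$ ending at that vertex; these labels take fewer than $m^r$ values, so some label class $S$ has size $>N/m^r$, and within $S$ no edge can be colored $c$ between a vertex with a given $c$-coordinate and a later vertex, which forces $S$ to be colored entirely with the remaining colors. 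Applied with $r=4$ to the first four colors: if $G$ has no $m$-edge monochromatic monotone path in the first four colors, then there is a vertex subset $S$ with $|S| > n/m^4 > t$ all of whose internal edges have the fifth color, giving a monochromatic clique of size at least $t+1$ in the fifth color.

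The main steps, in order, are: (1) define the labeling of vertices by the vector of longest monochromatic monotone path lengths in colors $1,2,3,4$ ending at each vertex; (2) observe the label takes at most $m^4$ distinct values (each coordinate in $\{0,1,\dots,m-1\}$, since a path of length $m$ means $m$ edges hence $m+1$ vertices, which is forbidden); (3) by pigeonhole extract a label class $S$ with $|S| \geq \lceil n/m^4 \rceil > t$; (4) argue that no edge inside $S$ can have any of the first four colors---if $u_i u_j \in E(G)$ with $i<j$ had color $c \leq 4$, then the longest $c$-colored monotone path ending at $u_j$ would be strictly longer than that ending at $u_i$, contradicting equality of labels; (5) conclude all $\binom{|S|}{2}$ edges inside $S$ have the fifth color, so $S$ induces a monochromatic clique of size $>t$, i.e.\ of size at least $t+1$, in the fifth color.

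There is essentially no serious obstacle here---this is the textbook "product-version" of the Erd\H{o}s--Szekeres monotone subsequence argument generalized to $r$ colors, and the only mild care needed is the off-by-one bookkeeping between path length (counting edges) and number of vertices, and checking that $n > tm^4$ indeed forces $|S| > t$ rather than merely $|S| \geq t$. If one wants to be slightly more careful about the exact threshold, note that $n > tm^4$ gives $|S| \geq \lceil n/m^4\rceil \geq t+1$, which is exactly what is needed for a clique of size $t+1$. The statement about the fifth color being a clique is then immediate, and no appeal to Lemma~\ref{lemtrans} is required for this particular lemma (that lemma is presumably used elsewhere to upgrade monotone paths in a transitive color class to cliques).
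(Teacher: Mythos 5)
Your proof is correct and takes essentially the same approach as the paper: label each vertex by the four-tuple of longest monochromatic monotone-path lengths (in colors $1$--$4$) ending there, pigeonhole on the at most $m^4$ labels to find $t+1$ vertices with the same label, and note that any edge in color $c\le 4$ inside this class would extend a longest $c$-colored path and break label equality. (Minor slip: you write ``$N^{1/(m^r)}$'' at one point where you mean $N/m^r$, but you then state and use the correct bound.)
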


\begin{proof}
Label each vertex $v$ of $G$ by the four-tuple $(l_1,l_2,l_3,l_4)$, where $l_i$ is the edge number of the longest monotone path in color $i$ ending at $v$. If any of these paths have at least $m$ edges we are done. If this is not the case, then all the labels are from $\{0,\dots,m-1\}^4$, so by the pigeonhole principle, some $t+1$ vertices must share the same label. We claim that these vertices induce a monochromatic clique in the fifth color. Indeed, assume the vertex $u$ precedes vertex $v$ in the vertex order and the edge $uv$ receives color $i$. Consider the longest monotone path in color $i$ ending at $u$. Extending this path with the $uv$ edge we get a longer monotone path in color $i$ ending at $v$ showing that $u$ and $v$ cannot share the same label unless $i=5$. This proves the lemma.
\end{proof}

Let $r(k;r)$ be the minimum integer $n$ such that every $r$-coloring of the edges of the complete graph on $n$ vertices contains a monochromatic clique on $k$ vertices. The existence of $r(k;r)$ follows by the famous theorem of Ramsey \cite{R30}, and following the arguments of Erd\H{o}s--Szekeres \cite{ES}, we have
\begin{lemma}\label{multicolor_ramsey}
$r(k;r) < r^{rk}.$
\end{lemma}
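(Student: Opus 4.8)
The plan is to prove Lemma~\ref{multicolor_ramsey}, the bound $r(k;r) < r^{rk}$, by the standard Erd\H os--Szekeres neighborhood-chasing argument, generalized from two colors to $r$ colors. First I would set up the recursion. Claim that $r(k;r) \le r\cdot r(k-1;r) + 1$ (more precisely, that $r(k;r) \le 1 + r\cdot(r(k-1;r)-1)$, but the cruder bound suffices). To see this, take a complete graph on $n = r\cdot r(k-1;r)+1$ vertices with an $r$-coloring of its edges and fix a vertex $v$. Its $n-1$ incident edges are split into $r$ color classes, so by pigeonhole one color class, say color $i$, contains at least $\lceil (n-1)/r \rceil \ge r(k-1;r)$ neighbors; call this set $N$. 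By definition of $r(k-1;r)$, the induced coloring on $N$ contains a monochromatic clique $K$ on $k-1$ vertices in some color $j$. If $j=i$, then $K \cup \{v\}$ is a monochromatic $k$-clique in color $i$ and we are done; if $j \ne i$, we recurse inside $K$ — but in fact the cleaner bookkeeping is the classical one where we shrink to a nested sequence of neighborhoods, one per color, so I'd phrase it that way: build a decreasing chain of vertex subsets, at each step passing to a monochromatic-neighborhood majority color, and after enough steps either a color is used $k$ times (giving the clique) or we run out of vertices.

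Next I would unwind the recursion with the base case $r(1;r) = 1$ (a single vertex is a monochromatic $1$-clique) to get $r(k;r) \le 1 + r + r^2 + \cdots + r^{k-1}\cdot(\text{constant})$, which is at most $r^k$ for $r \ge 2$ after summing the geometric series, and is certainly at most $r^{rk}$ since $r \ge 1$ and hence $r^k \le r^{rk}$. Actually the inequality $r(k;r) < r^{rk}$ is so lossy that even the weakest version of the recursion gives it with enormous room to spare; the only case needing a glance is $r=1$, where $r(k;1)=k < 1 = 1^k$ fails as stated, so I would implicitly assume $r \ge 2$ (which is the only case used in the applications, where $r=5$), or note that the statement is vacuous/irrelevant for $r=1$.

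The main (and only) obstacle here is purely expository: making sure the pigeonhole step produces a neighborhood of size at least $r(k-1;r)$ rather than $r(k-1;r)-1$, so that the recursion closes cleanly, and being slightly careful with the off-by-one in ``at least $\lceil(n-1)/r\rceil$''. Since the paper explicitly says ``following the arguments of Erd\H os--Szekeres'' and the target bound $r^{rk}$ is far from tight, I would keep the argument to a few lines: state the recursion $r(k;r) \le r\cdot r(k-1;r)+1$, note $r(1;r)=1$, and conclude $r(k;r) \le (r^k - 1)/(r-1) \cdot \text{(small factor)} + 1 < r^{rk}$ for $r \ge 2$, without optimizing constants.
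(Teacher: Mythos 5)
The paper gives no proof of this lemma at all; it merely cites Ramsey and ``the arguments of Erd\H os--Szekeres,'' so your instinct to run the neighborhood-chasing argument is exactly the intended one. However, the explicit recursion you state and then unwind is false, and this is a genuine gap, not a bookkeeping slip. You claim $r(k;r)\le r\cdot r(k-1;r)+1$, which unwinds (with $r(1;r)=1$) to $r(k;r)\le (r^k-1)/(r-1)<r^k$. That bound is already wrong for $r=2$, $k=4$: it would give $R(4,4)\le 15$, but $R(4,4)=18$. The failure is precisely the case you flag but do not resolve: after passing to the majority-color-$i$ neighborhood $N$ of $v$ and finding a monochromatic $(k-1)$-clique $K\subseteq N$ in some color $j$, if $j\ne i$ the vertex $v$ contributes nothing, and ``recursing inside $K$'' leaves you with only $k-1$ vertices --- far too few --- so the recursion does not close. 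The correct recursion is the off-diagonal one, $R(k_1,\dots,k_r)\le 2-r+\sum_i R(k_1,\dots,k_i-1,\dots,k_r)$, which does not reduce the diagonal number $r(k;r)$ to $r(k-1;r)$.

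The nested-neighborhood chain you gesture at is the right repair, but its count is quantitatively different from your recursion and you should carry it out. Build $S_0\supset S_1\supset\cdots$, with $v_i\in S_{i-1}$, $S_i$ the majority-color neighborhood of $v_i$ inside $S_{i-1}\setminus\{v_i\}$, and record the color $c_i$ used at step $i$. Each step shrinks by a factor of roughly $r$, so starting from $n\ge r^{t-1}$ vertices one can run the chain for $t$ steps. To force some color to repeat $k-1$ times among $c_1,\dots,c_{t-1}$ (those $k-1$ vertices together with $v_t$ form a monochromatic $K_k$), pigeonhole requires $t-1\ge r(k-2)+1$, i.e.\ $t\approx r(k-2)+2$, not $t\approx k$. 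This yields $r(k;r)\le r^{r(k-2)+1}$, which is indeed strictly less than $r^{rk}$ for every $r\ge 1$, $k\ge 2$, but only because the exponent $rk-2r+1$ beats $rk$ --- not, as your write-up suggests, because $r^k\le r^{rk}$. Replace the false recursion and the $r^k$ intermediate bound with this direct count and the proof is complete (and, as you note, the $r=1$ case is vacuous and can be set aside).
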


\begin{proof}[Proof of Theorem~\ref{main}]
Let $G$ be a complete bipartite simple topological graph whose vertex set is $U \cup V$, where $|U|  = m > 2(k-1)^4$ and  $|V| = n \geq 2^{k^{5k}}$. We assume without loss of generality that $m = 2(k-1)^4+1$ (otherwise we simply ignore the extra vertices in $U$). There is nothing to prove when $k=1$. When $k=2$, since $K_{3,3}$ is not planar, we can find a pair of crossing edges $u_1v_1$ and $u_2v_2$ in $G$, and as $G$ is simple, the induced topological subgraph on $u_1,u_2,v_1,v_2$ is weakly isomorphic to $C_{2,2}$. Therefore, we assume $k\geq 3$ for the rest of this proof.
 
By arbitrarily ordering the elements in $U$ and $V$, we have $U = \{u_1,\ldots, u_m\}$, and $V = \{v_1,\ldots, v_n\}$. For each 4-tuple $u_i,u_j \in U$ and $v_s, v_t \in V$, where $i < j$ and $s < t$, let us consider the drawing of $K_{2,2}$ induced on these vertices. Note that as we have a simple topological graph, only non-adjacent edges can cross, that is, $u_iv_s$ may cross $u_jv_t$ or $u_iv_t$ may cross $u_jv_s$. If both of these pairs of edges cross, then we obtain a thrackle drawing of $C_4$ and a small case-analysis shows that such a drawing does not exist. So we have three possibilities, either the first pair cross, or the second pair, or it is a plane drawing. We further refine this classification according to the orientation of the vertices around the crossing point obtaining the following five \textit{order types}. When speaking about the order of the vertices around the crossing point we mean the cyclic order of the edge-segments going to these vertices. 

\begin{enumerate}

    \item The 4-tuple $\{u_i,u_j,v_s,v_t\}$ is of \textit{type 1} if edge $u_iv_t$ crosses edge $u_jv_s$, and moreover, vertices $(u_i,v_s,v_t,u_j)$ appear in clockwise order from the crossing point of $u_iv_t$ and $u_jv_s$.  

    \item  The 4-tuple $\{u_i,u_j,v_s,v_t\}$ is of \textit{type 2} if edge $u_iv_t$ crosses edge $u_jv_s$, and moreover, vertices $(u_i,u_j,v_t,v_s)$ appear in clockwise order from the crossing point of $u_iv_t$ and $u_jv_s$.  

    \item  The 4-tuple $\{u_i,u_j,v_s,v_t\}$ is of \textit{type 3} if edge $u_iv_s$ crosses edge $u_jv_t$, and moreover, vertices $(u_i,v_t,v_s,u_j)$ appear in clockwise order from the crossing point of $u_iv_s$ and $u_jv_t$. 

    \item  The 4-tuple $\{u_i,u_j,v_s,v_t\}$ is of \textit{type 4} if edge $u_iv_s$ crosses edge $u_jv_t$, and moreover, vertices $(u_i,u_j,v_s,v_t)$ appear in clockwise order from the crossing point of $u_iv_s$ and $u_jv_t$. 

    \item  The 4-tuple $\{u_i,u_j,v_s,v_t\}$ is of \textit{type 5} if it induces a plane drawing of $K_{2,2}$.

\end{enumerate}

\begin{figure}[ht]
    \centering
    \includegraphics[width=\linewidth]{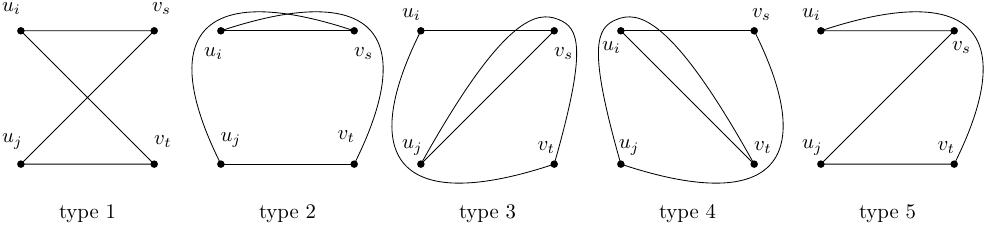}
    \caption{A drawing of each of the five types of $C_4$'s.}
    \label{fig:types}
\end{figure}

\noindent See Figure \ref{fig:types}. For $v_s,v_t \in V$ and $w \in \{1,2,3,4,5\}$, let $G^{(w)}_{s,t}$ be the ordered graph whose vertex set is $U$, and edges are pairs of vertices $\{u_i,u_j\}$ such that the 4-tuple $\{u_i,u_j,v_s,v_t\}$ is of type $w$.

We claim that the ordered graph $G^{(w)}_{s,t}$ is transitive for $w \in \{1,2,3,4\}$, that is, with respect to the first four order types. We verify the statement for $G^{(1)}_{s,t}$. For $i < j < \ell$, suppose that $\{u_i,u_j\}$ and $\{u_j,u_\ell\}$ are edges in $G^{(1)}_{s,t}$. Consider the drawing of $K_{2,2}$ induced by the 4-tuple $\{u_i,u_j,v_s,v_t\}$, specifically the ``triangle'' formed by $u_j$, $v_t$ and the crossing point between $u_jv_s$ and $u_iv_t$. Since $(u_j,v_s,v_t,u_\ell)$ must appear in clockwise order from the crossing point of $u_jv_t$ and $u_\ell v_s$, we must exit the ``triangle'' at this crossing point as we travel along $u_{\ell}v_s$ from $v_s$ to $u_\ell$. This implies that $u_{\ell}v_s$ crosses both $u_jv_t$ and $u_iv_t$. Further, $u_\ell$, $v_s$, and $v_t$ must appear in the same cyclic order from the crossing point of $u_jv_t$ and $u_\ell v_s$ as well as from the crossing point of $u_iv_t$ and $u_\ell v_s$. The former is clockwise order from the assumption $\{u_j,u_\ell\}\in E(G^{(1)}_{s,t})$, so the latter also clockwise implying $\{u_i,u_\ell\}\in E(G^{(1)}_{s,t})$ as claimed. See Figure \ref{fig:type1}. The transitivity property of $G^{(2)}_{s,t}, G^{(3)}_{s,t}$, and $G^{(4)}_{s,t}$ can be argued similarly, or be reduced to the case of $G^{(1)}_{s,t}$ by reversing the vertex orders on $U$ or $V$. For example, a type 3 quadruple $\{u_i,u_j,v_s,v_t\}$ would have been type 1 if we had ordered the vertices in $U$ the same but in $V$ reversely.

\begin{figure}[ht]
    \centering
    \includegraphics[width=0.8\linewidth]{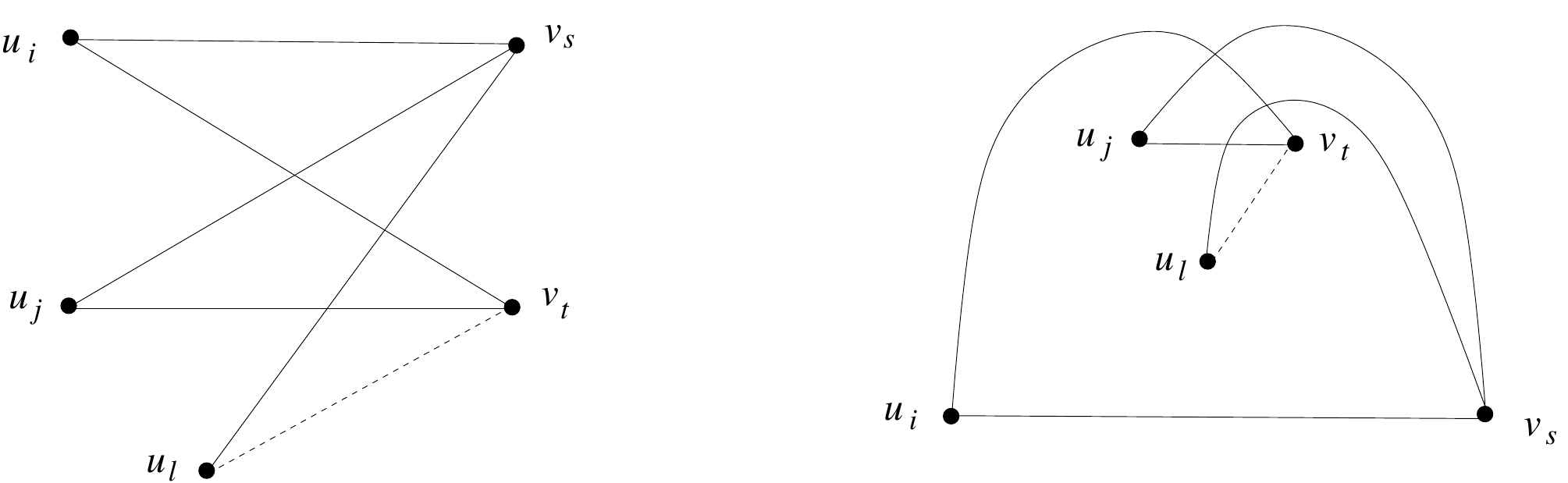}
    \caption{Proof of transitivity of $G^{(1)}_{s,t}$.}
    \label{fig:type1}
\end{figure}

Since $|U| > 2(k-1)^4$, by combining Lemmas \ref{lemtrans}, \ref{lemES}, and the fact that $G_{s,t}^{(w)}$ is transitive when $w \in \{1,2,3,4\}$, we have the following observation.
\begin{claim}
    For each pair $v_s,v_t \in V$, there is a subset $S\subset U$ such that either
    \begin{enumerate}
    
        \item $|S| = k$, and $S$ induces a clique in $G_{s,t}^{(w)}$ for some $w \in \{1,2,3,4\}$, or

        \item $|S| = 3$, and $S$ induces a clique (i.e., a triangle) in $G_{s,t}^{(5)}$. 

    \end{enumerate}
\end{claim}

This claim implies that we can label the edge $v_sv_t$ of the complete graph on $V$ with a pair $(S,i)$, where $i \in \{1,\ldots, 5\}$ and $S \subset U$ is a clique in $G^{(i)}_{s,t}$ of size $3$ if $i=5$ or of size $k$ otherwise. In case there are more possible labels we choose arbitrarily. For $k\geq 7$, using $|U| = 2(k-1)^4+1 \leq 2k^4$, the number of distinct labels we use on the edges is at most \begin{equation*}
    r(k) := 4\binom{|U|}{k} + \binom{|U|}{3} \leq 5\binom{2k^4}{k} \leq 5(2ek^3)^k \leq k^{4k}.
\end{equation*} And we can further estimate using $k \geq 7$ \begin{equation*}
    r(k)^{r(k) \cdot k} < 2^{k^{5k}}.
\end{equation*} For $k = 3,4,5,6$, the upper bound for $r(k)$ is too crude, but we can verify the previous inequality by direct computation.

Applying Lemma~\ref{multicolor_ramsey} together with $|V| \geq 2^{k^{5k }}$, there is a subset $T \subset V$ of size $k$ such that every pair in $T$ has the same label $(S,i)$. Let us consider the topological subgraph $G'$ induced on $S \cup T$. The proof now falls into the following cases.

\smallskip

\noindent Case 1.  Suppose $|S| = k$ and $i \in\{1,2,3,4\}$. We can assume that $i = 1$, since a symmetric argument would follow otherwise.  Then for $u_i,u_j \in S$ and $v_s,v_t \in T$, where $i < j$ and $s < t$, $\{u_i,u_j,v_s,v_t\}$ is of type~1, and therefore, $u_iv_t$ crosses $u_jv_s$.  Hence, $G'$ is weakly isomorphic to $C_{k,k}$.  

\smallskip

\noindent Case 2.  Suppose $|S| = 3$ and $i = 5$.  Then $G'$ is a plane drawing of $K_{3,k}$. This is impossible as $k \ge 3$.    
\end{proof}

\subsection{Construction: Proof of Proposition~\ref{main2}}
Here we show that one needs exponentially many vertices in a simple topological graph---even in a simple complete topological graph---to ensure the presence of a subgraph weakly isomorphic to $C_{k,k}$. Note however, that this lower bound does not match the doubly exponential bound in Theorem~\ref{main}.

Fix $k\ge3$. We construct a large random simple complete topological graph as follows, and show that with positive probability it has no subgraph weakly isomorphic to $C_{k,k}$. Set $n  = 2^{\lfloor k/2\rfloor}$. Let $U$ be a set of $n$ points placed on the $x$-axis. This will be the vertex set of the graph we construct. For each pair $u,v\in U$, 
draw an edge connecting $u$ and $v$ as a half-circle in the upper or
lower half of the plane uniformly at random and independently for each pair $(u,v)$. Clearly, this will result in a simple drawing of $K_n$ in the plane.  

Consider four vertices $u_1,u_2,u_3,u_4\in U$ that appear in this order on the $x$ axis. These vertices induce a drawing of $K_4$ in which only the edges $u_1u_3$ and $u_2u_4$ cross if and only if they are drawn in the same half plane. If these two edges are drawn in distinct half planes we obtain a plane drawing of $K_4$.

Let $S,T\subseteq U$ be disjoint sets of size $k$ and let us calculate the probability that the edges between $S$ and $T$ form a subgraph weakly isomorphic to $C_{k,k}$. First note that $C_{k,k}$ has no subgraph that is a plane drawing of $C_4$, so for the weak isomorphism we need that for every $u\ne u'\in S$ and $v\ne v'\in T$ some two edges between $\{u,u'\}$ and $\{v,v'\}$ cross. This implies that the order on $x$ axis cannot alternate between the vertices in $S$ and the vertices of $T$, because in that case the subgraph is a plane drawing with probability one. This implies that either $S$ can be split into two sets $S_1$ and $S_2$, so that all vertices in $S_1$ are to the left of the vertices of $T$ and all vertices in $S_2$ are to the right of the vertices in $T$, or $T$ can be split in two sets, one to the left and one to the right of $S$. We call such pairs \textit{separated}.

Now let $S$ and $T$ be two separated $k$-element set of vertices. We may assume that $S$ splits into $S_1$ and $S_2$ as above and $S_1$ is not empty (otherwise, switch the roles of $S$ and $T$).
Let $v_0$ and $v_1$ be the leftmost and rightmost vertices in $T$, respectively, while $u_1$ is the rightmost vertex in $S_1$ and $u_0$ is the leftmost vertex in $S_2$, or in case $S_2$ is empty, then $u_0$ is the leftmost vertex in $S_1$. We claim that the subgraph formed by the edges between $S$ and $T$ is weakly isomorphic to $C_{k,k}$ if and only if all of its edges, with the possible exception of $u_1v_0$ and $u_0v_1$, are drawn in the same (upper or lower) half plane. The if part of this equivalence is easy to check (and strictly speaking, not even needed for our argument). For the only if part assume without loss of generality that $u_1v_1$ is drawn in the upper half plane. Now consider two other vertices: $u_1\ne u\in S$ and $v_1\ne v\in T$. If $uv$ is drawn in the lower half plane, then the $4$-cycle $u_1v_1uv$ is a plane drawing, so we do not have the desired weak isomorphism. So the weak isomorphism implies that all of these edges are drawn in the upper half plane including the edge $u_0v_0$. But then a similar argument shows that all the edges $uv$ with $u_0\ne u\in S$ and $v_0\ne v\in T$ must also be drawn in the upper half plane. This covers all the edges mentioned in the claim. 

As a result, the probability that the edges between the separated sets $S$ and $T$ form a subgraph weakly isomorphic to $C_{k,k}$ is exactly $2^{-k^2+3}$ and therefore the expected number of subgraphs weakly isomorphic to $C_{k,k}$ is exactly
$$k\binom{n}{2k}2^{-k^2+3}.$$

By our choice of $n =2^{\lfloor k/2\rfloor}$ and $k\geq 3$, this is less than $1$, so the random drawing of $K_n$ contains no subgraph weakly isomorphic to $C_{k,k}$ with positive probability.

\section{Local thrackles}\label{sec:local}

We say that a graph $G$ is \emph{thrackleable} (\emph{local thrackleable}) if $G$ has a thrackle drawing (local thrackle drawing).  Likewise, we say that $G$ is \emph{geometric local thrackleable} if there is a drawing of $G$ with straight-line edges that is a local thrackle.  

Let us first consider the case when edges are drawn as straight-line segments. In this case, we can even classify all geometric local thrackleable graphs following the arguments of Woodall~\cite{woodall1971thrackles}. We prove Proposition \ref{prop:geometric} below which implies Proposition~\ref{lt:geometric}. A \textit{caterpillar} is a tree in which each vertex is adjacent to at most two non-leaf vertices. A \textit{spiked cycle} is a connected graph with one cycle such that every vertex not on this cycle has degree one. The \textit{length} of a spiked cycle is the length of its unique subgraph that is a cycle. Let us first  recall Woodall's characterization of geometric thrackles \cite{woodall1971thrackles} using these definitions:

\begin{theorem}\label{thm:woodall}
A graph can be drawn as a thrackle with straight-line segments as edges if and only if either it is a union of disjoint caterpillars, or it is a spiked cycle of odd length with some additional isolated vertices.
\end{theorem}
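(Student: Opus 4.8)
The plan is to prove the two implications separately; the ``only if'' direction carries the content, and I would base it on two hereditary facts about straight-line thrackles.

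First I would record that a subgraph of a straight-line thrackle, with the induced drawing, is again a straight-line thrackle, so it suffices to locate forbidden sub-configurations. Then I would prove two lemmas. \emph{Lemma A (after Perles):} in a straight-line thrackle, unless a vertex $v$ is incident to all edges, the edges at $v$ emanate into an open half-plane bounded by a line through $v$ --- otherwise one finds either two edges $va,vb$ with $v$ in the relative interior of the segment $ab$, or three edges $va,vb,vc$ with $v$ in the interior of the triangle $abc$, and then a non-incident edge would have to meet $ab$ twice, respectively would have a supporting line separating $v$ from all of $a,b,c$ although $v$ lies in their convex hull. \emph{Lemma B (parity):} no line crosses every edge of an odd cycle, since consecutive vertices of the cycle would then lie in opposite open half-planes of the line; moreover, applying this to the supporting line of one edge of a thrackled cycle, together with one short crossing argument for the two edges adjacent to it, shows that a thrackled cycle has odd length. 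In particular, an odd cycle in a thrackle has no edge disjoint from it.

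For the ``only if'' direction I would argue as follows. By Lemma B, $G$ has no even cycle, no two vertex-disjoint cycles, and no theta subgraph or figure-eight subgraph (each of these contains an even cycle, or an odd cycle together with an edge disjoint from it); hence each connected component of $G$ is a tree or contains a unique cycle, which is then odd. If a component contains an odd cycle $C$, then by Lemma B no edge of $G$ is disjoint from $C$, which forces that component to be a spiked cycle and every other component to be an isolated vertex. If $G$ is a forest, I would show that no component contains the seven-vertex spider, i.e.\ the once-subdivided $K_{1,3}$: in a thrackle drawing of such a spider with center $c$, Lemma A places the three edges at $c$ inside a half-plane, so one of them, say $ca_2$, is angularly strictly between the other two; then the edge $a_2b_2$ starts inside the convex cone spanned by the rays $ca_1$ and $ca_3$, leaves that cone at most once, and therefore cannot cross both $ca_1$ and $ca_3$, a contradiction. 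Since a tree with no such spider subgraph is exactly a caterpillar, this completes the direction.

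For the ``if'' direction I would exhibit explicit straight-line drawings: a disjoint union of caterpillars is drawn one caterpillar at a time in widely separated regions, each via the classical convex-position thrackle drawing of a caterpillar (place the spine vertices and their leaves on a circular arc in an order in which any two independent edges interleave, hence cross); an odd cycle is drawn as the regular star polygon, in which all edges pairwise cross or meet at a vertex; and a spiked odd cycle is obtained from the star-polygon drawing of its cycle by attaching each pendant vertex in a small neighbourhood of its cycle vertex, just outside the local ``ear'' of the cycle there, so that the new edge crosses every other edge exactly once. I expect the main obstacle to be this last construction --- checking that all pendant edges can be inserted simultaneously without spoiling a required crossing --- together with making Lemma A's case analysis fully rigorous; once Lemmas A and B are established, the rest is routine.
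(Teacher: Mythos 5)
The paper does not prove this statement; it is quoted as Woodall's 1971 characterization (the citation \cite{woodall1971thrackles}) and used as a black box in the proof of Proposition~\ref{prop:geometric}. So there is no internal proof to compare against, and your proposal must be judged on its own merits as a reconstruction of the Woodall/Perles argument. The overall architecture is right: a half-plane lemma at each vertex, a parity/supporting-line argument for cycles, the derived forbidden configurations (even cycle, disjoint cycle–edge pair, theta, figure-eight, subdivided $K_{1,3}$), and explicit star-polygon drawings for the converse.

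There is, however, a genuine gap in your Lemma~A as you have sketched it, and it bites exactly where you need the lemma. You argue that if the edges at $v$ do not fit in an open half-plane, then either $v$ lies in a segment $ab$ or in a triangle $abc$ with $a,b,c$ neighbors of $v$, and you derive a contradiction from ``a non-incident edge.'' But your contradiction (the supporting line of that edge separating $v$ from \emph{all} of $a,b,c$) tacitly assumes that the non-incident edge is also disjoint from $a,b,c$, so that it must cross all of $va,vb,vc$. In the subdivided $K_{1,3}$ --- the very place you invoke Lemma~A --- this fails: every edge not incident to the center $c$ is one of $a_1b_1,a_2b_2,a_3b_3$, each of which \emph{does} touch one of the $a_i$. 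The lemma is still true, but a different (and necessary) argument is needed: if, say, $v\in\operatorname{int}\operatorname{conv}\{a_1,a_2,a_3\}$ and $a_1b_1$ crosses $va_2$ and $va_3$, then the line through $a_1b_1$ passes through $a_1$ and has $a_2,a_3$ on one side and $v$ strictly on the other; hence $a_1,a_2,a_3$ lie in one closed half-plane while $v$ lies in the complementary open half-plane, contradicting $v$ being interior to their convex hull. A similar one-line fix handles the collinear case (an edge $af$ crossing $vb$ would have two points on the line $ab$, forcing a forbidden overlap). You should state Lemma~A with this case analysis built in, rather than ``a non-incident edge.'' Two smaller remarks: in the spiked-cycle branch you need to argue explicitly that no vertex off the cycle can have degree $\ge 2$ (an edge with both endpoints off the cycle is disjoint from the odd cycle, which your Lemma~B forbids), and the ``if'' direction for spiked odd cycles is asserted rather than checked --- you correctly flag it, and indeed the paper's own star-polygon construction in the proof of Proposition~\ref{prop:geometric} (for even spiked cycles) shows that such a verification is routine but not free.
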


Our characterization of geometric local thrackles is the following:

\begin{proposition}\label{prop:geometric}
    A graph can be drawn as a local thrackle with straight-line segments as edges if and only if it is the disjoint union of caterpillars, spiked cycles of odd length and spiked cycles of even length at least eight.
\end{proposition}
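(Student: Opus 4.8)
The plan is to prove both directions of Proposition~\ref{prop:geometric} separately, building heavily on Woodall's theorem (Theorem~\ref{thm:woodall}) and on a careful analysis of which configurations force a plane $P_3$.

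\textbf{The ``if'' direction.} Here I would exhibit local thrackle drawings with straight-line segments for each of the allowed pieces. Caterpillars and odd spiked cycles are already thrackleable by Theorem~\ref{thm:woodall}, hence local thrackleable, so the only genuinely new work is even spiked cycles of length at least eight. The natural candidate is to place the even cycle $C_{2\ell}$ (with $2\ell \ge 8$) on a convex curve in the ``twisted'' configuration used for even-cycle near-thrackles: label the vertices so that edges are long chords, each pair of which either crosses or shares an endpoint. A convex $C_{2\ell}$ drawn as $v_1v_{\ell+1}, v_{\ell+1}v_2, v_2 v_{\ell+2}, \dots$ (the standard ``musquash''-type construction) has every two independent edges crossing \emph{except} for the $\ell$ antipodal pairs; but those non-crossing independent pairs are far apart and the point is that a plane path of length $3$ would require three consecutive edges of the cycle to be pairwise plane, and consecutive edges in this ordering always cross. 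I would verify that no three edges forming a $P_3$ are mutually non-crossing, which amounts to checking that among any two consecutive cycle-edges at least one crossing is present, and then argue that the spikes (degree-one vertices) can be attached as short segments near their anchor vertex so that each spike edge crosses every edge it needs to, or at worst is independent of only edges with which it does not form a $P_3$. For the disjoint union, I would simply scale and place the pieces in far-apart regions of the plane so that edges from different components are disjoint (no crossing, no shared vertex), and note that a $P_3$ lives inside a single component, so the union is a local thrackle iff each piece is.

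\textbf{The ``only if'' direction.} This is where I expect the main obstacle. I would start from a geometric local thrackle drawing of $G$ and first reduce to the structure theorem of Woodall by an intermediate claim: a graph with straight-line edges and no plane $P_3$ has at most $n$ edges (this is Proposition~\ref{lt:geometric}, which we may assume), and more precisely I would try to show that each connected component is either a caterpillar or a spiked cycle. The key local observation is that in a straight-line drawing, two independent edges that do \emph{not} cross, together with any edge incident to an endpoint of each that avoids crossing both, produce a plane $P_3$; this forces strong restrictions on degrees and on how high-degree vertices can be arranged. Combining this with the cycle-space / parity bound from the Lovász--Pach--Szegedy style argument (or directly Perles's argument as adapted in \cite{cairns2012outerplanar}) should pin the component structure down to ``a forest of caterpillars or a single spiked cycle.'' Then the remaining task is to rule out even spiked cycles of length $4$ and $6$ (length $2$ is not a cycle). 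For $C_4$: a straight-line $C_4$ must have a pair of adjacent sides that do not cross the opposite... more carefully, in any straight-line drawing of $C_4$ with vertices in general position, either it is convex (then opposite sides are disjoint and two incident sides form a plane $P_3$ with an opposite side) or one vertex is inside the triangle of the other three (again a plane $P_3$ appears). A short exhaustive geometric case analysis on the order type of the four points handles $C_4$, and a similar but slightly longer analysis handles $C_6$: in a straight-line hexagon, among the three ``long diagonal'' pairs of opposite edges, at least one pair is disjoint, and disjoint opposite edges in a $6$-cycle are joined by two length-one paths on each side, yielding a plane $P_3$ unless further crossings occur, which a finite order-type check shows is impossible. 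I would organize this as: (1) the cycle in a spiked cycle must itself be drawn without a plane $P_3$ among its own edges and the spike edges; (2) an even convex cycle of length $\ge 8$ admits the twisted drawing but lengths $4,6$ do not; (3) non-convex placements of the cycle vertices only make things worse. The crossing-parity fact (Lovász--Pach--Szegedy) that in any good drawing two edges of an even cycle... I would use it to say that an even cycle cannot be drawn so that every two independent edges cross, which is why length $\ge 8$ is needed to have enough ``slack.''

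\textbf{Where the difficulty concentrates.} The hard part will be the ``only if'' direction, specifically the clean reduction to Woodall-type components and the two small-case exclusions of $C_4$ and $C_6$ spiked cycles — these require a careful order-type case analysis rather than a slick argument, and one must be careful that attaching spikes cannot rescue a forbidden short even cycle. I would lean on the fact (provable by the same parity argument behind Perles's proof) that a geometric local thrackle component has exactly one cycle or none, so that the entire classification reduces to: (forest) $\Rightarrow$ caterpillar, and (unicyclic) $\Rightarrow$ spiked cycle whose length is odd or $\ge 8$, with the length constraint coming from the small-case analysis above. Throughout I would suppress the floor/ceiling and general-position conventions already adopted in the paper.
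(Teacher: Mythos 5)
\textbf{The ``if'' direction: your construction does not work.} You propose drawing $C_{2\ell}$ in convex position as the zigzag $v_1v_{\ell+1}, v_{\ell+1}v_2, v_2v_{\ell+2}, \dots$, claiming that the only non-crossing independent pairs are the $\ell$ antipodal ones. This is false. Trace the zigzag around: the edges alternate between chord-length $\ell$ and chord-length $\ell-1$, \emph{except} that the closing edge $v_{2\ell}v_1$ has chord-length $1$. Take $2\ell=8$: the cycle edges are $(1,5),(5,2),(2,6),(6,3),(3,7),(7,4),(4,8),(8,1)$, and the edges $(7,4)$ and $(8,1)$ — at cyclic distance $2$ — do not cross, so $7\!-\!4\!-\!8\!-\!1$ is a plane $P_3$. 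The same defect appears near the closing edge for every $\ell$. What the paper does instead is use a uniform-jump drawing: put $v_1,\dots,v_n$ on a regular $n$-gon and connect $v_i$ to $v_{i+m}$ with $m\in\{n/2-1,n/2-2\}$ chosen odd, so $\gcd(m,n)=1$; when $3m>n$ every pair of cycle-edges at distance $\le 2$ crosses, which is exactly what is needed for a local thrackle, and the one residual case $n=10$ is handled by an ad hoc drawing. Your ``musquash'' picture is not symmetric, and symmetry is what makes the verification go through.

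\textbf{The ``only if'' direction: the reduction is vague and partly circular.} You invoke Proposition~\ref{lt:geometric} (``at most $n$ edges'') as something ``we may assume,'' but in the paper that proposition is a \emph{corollary} of Proposition~\ref{prop:geometric}; you cannot use it as input without providing an independent proof. More importantly, your stated ``key local observation'' does not by itself give the component structure. The paper's argument is a different and much sharper local fact: if a vertex $v$ has \emph{three non-leaf neighbors}, then one of the edges $vw$ must have $v$'s other neighbors on both sides of the line through $vw$; since $w$ is not a leaf it has another neighbor $w'$, and the segment $ww'$ can only leave $w$ into one halfplane of that line, so it misses some edge $va$ entirely, producing the plane path $a\!-\!v\!-\!w\!-\!w'$. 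This single observation shows that after deleting all leaves the remaining graph has maximum degree $\le 2$, hence is a disjoint union of paths and cycles, i.e., $G$ is a disjoint union of caterpillars and spiked cycles — no parity or cycle-space argument is needed. From there one only has to exclude $C_4$ and $C_6$: the paper observes that any local thrackle drawing of $C_4$ must already be a thrackle drawing (only two independent pairs exist) and proves the analogous statement for $C_6$ as a separate lemma (Lemma~\ref{lem:c6}, done by a finite computer check for general simple drawings, with a remark that for straight-line drawings the case analysis is shorter), after which Woodall's theorem kills both since even cycles are not geometric thrackleable. Your sketch of the $C_4$/$C_6$ exclusion points in the right direction but is not a proof, and without the ``three non-leaf neighbors'' observation the clean ``forest or unicyclic'' decomposition you are hoping for is not established.
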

\begin{proof}

    First, we argue that if a graph $G$ contains a vertex $v$ with at least three non-leaf neighbors, then $G$ is not geometric local thrackleable. See Figure~\ref{fig:geometric_fobid} for two such graphs with $G_1$ already covered in \cite{woodall1971thrackles}. Indeed, if $v$ has three non-leaf neighbors, then in any straight line drawing, one of them ``splits'' the neighborhood of $v$, that is, $v$ has neighbors in both sides of the line of the edge $vw$ for some non-leaf neighbor $w$. But as $w$ is no leaf it has another neighbor $w'\ne v$ and in any local thrackle drawing $ww'$ intersects (crosses or has a common end point with) every edge incident to $v$, which is only possible if $w$ does not split the neighborhood, a contradiction.

    \begin{figure}[ht]
        \centering
        \includegraphics[width=0.5\linewidth]{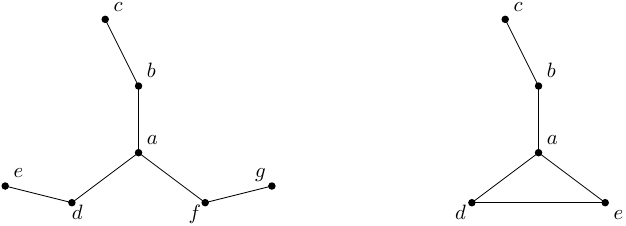}
        \caption{Two forbidden subgraphs $G_1$ (left) and $G_2$ (right) for geometric local thrackles.}
        \label{fig:geometric_fobid}
    \end{figure}
    
    Let us remove all the leaves of a geometric local thrackleable graph $G$. The observation above implies that the remaining graph has maximum degree at most $2$, so it is the disjoint union of paths and cycles. This implies that $G$ is the disjoint union of caterpillars and spiked cycles. To finish the only if part of the proposition we are left to show that $C_4$ and $C_6$ are not geometric local thrackleable. It is easy to see that if $C_4$ is drawn as a local thrackle, then this drawing must be a thrackle. The same holds true for $C_6$ as we shall prove in Lemma~\ref{lem:c6} (although for straight line drawings the case analysis could be shortened). On the other hand, it follows from Theorem \ref{thm:woodall} that even cycles are not geometric thrackleable ($C_4$ is not even thrackleable), hence $C_4$ and $C_6$ are not geometric local thrackleable.

    For the if part of proposition, notice that the components of $G$ can be drawn pairwise disjointly, so it is enough to prove that caterpillars, odd spiked cycles and even spiked cycles of length at least $8$ are geometric local thrackleable. The first two cases are already handled by Theorem \ref{thm:woodall} (as those graphs are geometric thrackleable), so here we concentrate on even spiked cycles. Let $n \geq 8$ be even. We first place vertices $v_1,v_2,\dots,v_{n-3}$ on the circle according to the following clockwise order: $v_1,v_3,v_5,\dots, v_{n-3}$, $v_2, v_4, \dots, v_{n-4}$. Then we put $v_{n-2}$ on the clockwise arc $v_1v_3$, $v_{n-1}$ on the clockwise arc $v_2v_4$, and $v_{n}$ on the clockwise arc $v_{n-5}v_{n-3}$. Observe that we obtain a geometric local threackle drawing of $C_n$ by connecting $v_iv_{i+1}$ using straight-line segments. See Figure~\ref{fig:c8} for an illustration. Finally, we can choose an interval on the cycle for each $v_i$ such that if we place the leaf neighbors of $v_i$ on this intervals the resulting drawing is still a local thrackle.
\end{proof}


\begin{figure}[ht]
        \centering
        \includegraphics{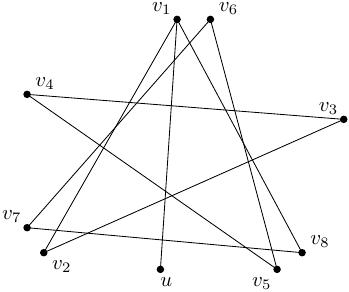}
        \caption{$C_8$ can be drawn as a linear local thrackle, and with a spike $u$ attached to $v_1$.}
        \label{fig:c8}
\end{figure}

We continue with the case when edges are drawn $x$-monotone.

\begin{proof}[Proof of Proposition~\ref{lt:xmon}]
For the lower bound we give a construction with $6/5n-O(1)$ edges. See Figure~\ref{fig:xmon}. First, we draw a path as seen on the top of Figure~\ref{fig:xmon} as a local thrackle. Then we make a translated copy of this path as seen on the middle of Figure \ref{fig:xmon}, this is clearly still a local thrackle. Finally, we add further paths of length $2$, one of them is drawn blue on the middle of Figure \ref{fig:xmon}, and moreover, one can check that adding this one path maintained that the drawing is a local thrackle. All such blue paths are drawn on the bottom of Figure \ref{fig:xmon}, where it is not hard to check that the drawing gives a simple topological graph and also it is still a local thrackle (the blue $2$-paths are far enough in the graph, thus no planar $3$-path can be created when adding all blue $2$-paths, if adding one did not create a planar $3$-path), finishing the construction. Note that we connected using a $2$-path every second vertex of one path with the corresponding vertex of the second path, thus when extending the construction with $2$ vertices on each path plus a vertex to add the $2$-path joining, we added $5$ vertices and $6$ edges. Thus, the number of edges is $6/5n-O(1)$ (the constant error comes from the fact that close to the two ends of the two paths some edges are missing).

\smallskip

For the upper bound, let $G$ be a local thrackle with $x$-monotone edges. We may assume that every vertex has degree at least one. We distinguish three types of vertices. A vertex that has no edges going to the left (resp. right) is called a \textit{left-vertex} (resp. \textit{right-vertex}). A vertex that has edges in both directions is called a \textit{middle-vertex}. Assume there are $l$ left-vertices, $r$ right-vertices and $m$ middle-vertices, $l+r+m=n$.

At each left-vertex, delete the incident edge that leaves highest at this vertex. Delete at each right-vertex the incident edge that leaves lowest at this vertex. We have deleted at most $l+r$ edges.
Notice that if there is a remaining edge between a left-vertex and a right-vertex, then together with the $2$ deleted edges incident to these vertices, we get a plane $3$-path, a contradiction. Therefore, no edge is left between a left- and a right-vertex. Similarly, if a middle-vertex is connected to two left-vertices in the remaining graph, then adding the deleted edge of the left-vertex on the higher of these two edges, we obtain a plane $3$-path. The contradiction shows that every middle vertex is connected to at most one left-vertex in the remaining graph.

Next, we claim that the remaining edges connecting the left-vertices and middle-vertices form a matching. After the observation above it is enough to show that any left-vertex has at most one middle-vertex neighbor. Indeed, otherwise take two edges from a left-vertex to middle-vertices and two more edges going right from these middle-vertices. It is easy to see that one of the $3$-paths in the obtained $4$-path or $4$-cycle must be plane, a contradiction. Therefore, the number of remaining edges connecting left- and middle vertices is at most $\min(l,m)$.

Similarly, there are at most $\min(r,m)$ edges between the right-vertices and middle-vertices left in the graph. No edge connects two middle-vertices because that would lead to an $x$-monotone (therefore plane) $3$-path. Putting everything together, our original graph has at most $l+r+\min(l,m)+\min(r,m)$ edges. Assume without loss of generality that $r\le l$. Then this is at most $l+r+m+\min(l,r,m)=n+\min(l,r,m)\le n+n/3=4n/3$.
\end{proof}

\begin{figure}
	\centering
	\includegraphics[width= 16cm]{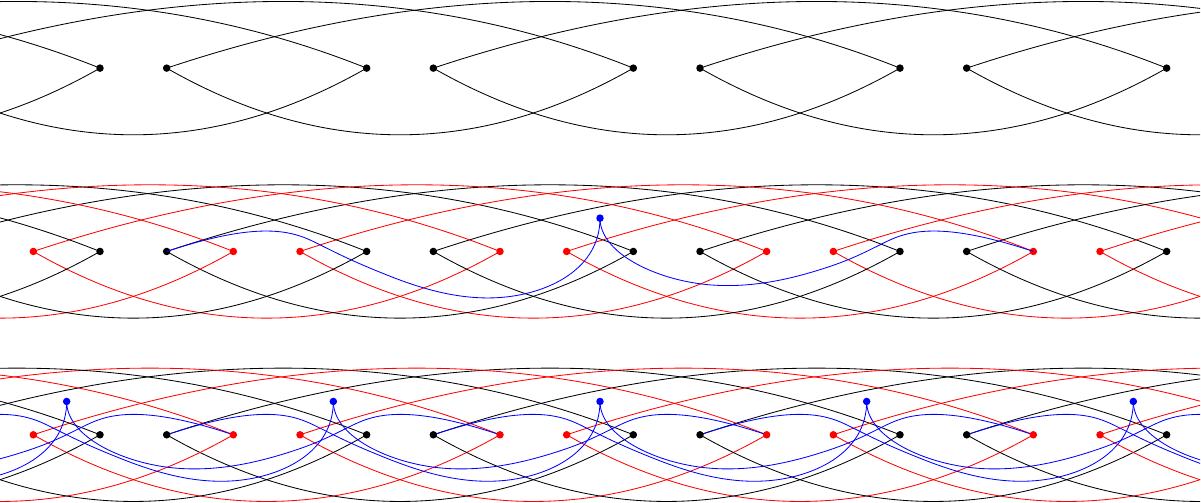}
	\caption{Two paths (black and red) joined by additional $2$-paths (blue) drawn as a local thrackle with $x$-monotone edges.}
	\label{fig:xmon}
\end{figure}

For the case of general local thrackles we need the following statements. Let $\Theta_3$ denote a graph consisting of two vertices connected by three internally-disjoint
paths of length three.  That is,  $\Theta_3$ is a graph with 8 vertices and 9 edges.

\begin{theorem}[\cite{faudree1983class}]\label{thm:notheta3comb}
    A graph that does not contain $\Theta_3$ as a subgraph has at most $O(n^{4/3})$ edges.
\end{theorem}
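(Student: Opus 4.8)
The plan is to reprove Theorem~\ref{thm:notheta3comb} as the $k=\ell=3$ instance of the Faudree--Simonovits bound $\mathrm{ex}(n,\Theta_{k,\ell})=O_{k,\ell}(n^{1+1/k})$ from \cite{faudree1983class}, specializing their argument to $\Theta_3$. First I would perform the standard reduction: if $G$ has $n$ vertices and $e(G)=cn^{4/3}$ edges, then deleting vertices of degree below $e(G)/n$ one at a time leaves a nonempty subgraph of minimum degree $\delta\ge cn^{1/3}$. So it suffices to show that once the absolute constant $c$ is large enough, every graph on at most $n$ vertices with minimum degree $\delta\ge cn^{1/3}$ contains $\Theta_3$; equivalently, a $\Theta_3$-free graph of minimum degree $\delta$ has $\Omega(\delta^3)$ vertices.

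Before the main argument it is worth recording why a purely elementary count does not suffice. For a fixed pair of vertices $u,v$, the length-$3$ paths between them correspond to the edges of an auxiliary graph $H$ on $(N(u)\cup N(v))\setminus\{u,v\}$, the edge of $H$ recording the two internal vertices of the path; three internally disjoint such paths are exactly a matching of size $3$ in $H$. Hence if $G$ is $\Theta_3$-free then $H$ has no matching of size $3$, so $e(H)=O(n)$ by the Erd\H{o}s--Gallai theorem, i.e.\ there are $O(n)$ length-$3$ paths between any pair. Comparing this with the total number of length-$3$ paths, which is $\Omega(m\delta^2)=\Omega(m^3/n^2)$ by convexity, only yields $m=O(n^{5/3})$. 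To reach the optimal exponent one needs a layered argument in the spirit of Bondy--Simonovits.

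Accordingly, the heart of the plan is to run a breadth-first search from an arbitrary vertex $r$, producing layers $L_0=\{r\},L_1,L_2,L_3$, and to show that in a $\Theta_3$-free graph these layers must grow fast: roughly $|L_{j+1}|\gtrsim\delta\,|L_j|$ for $j=0,1,2$. For each $j$ one counts the edges leaving $L_j$ into $L_{j+1}$ — the minimum-degree hypothesis gives $\ge\delta|L_j|$ edges at $L_j$, from which one subtracts those internal to or going backward from the ball $L_0\cup\dots\cup L_j$ — and then bounds the number of vertices of $L_{j+1}$ that absorb many of these edges, using that a vertex $w$ with $\ge 3$ neighbours in $L_j$ yields three length-$(j{+}1)$ paths from $r$ to $w$ along the BFS tree which, after a pruning/rerouting step to delete shared prefixes, become internally disjoint — producing $\Theta_{j+1,3}$, and in particular $\Theta_3$ when $j+1=3$. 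Iterating the growth inequality gives $|L_3|\gtrsim\delta^3\ge c^3 n>n$ for $c$ large, a contradiction. The downward count can fail only when the relevant portion of the graph is unusually dense, but a sufficiently dense graph contains $\Theta_3$ outright (it even contains $K_8$), so that case is handled separately.

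The main obstacle is making the layer-growth inequality quantitatively correct: one must carefully control the edges internal to and backward from each layer (equivalently, rule out that the layers are too ``clustered'', and extract density, hence $\Theta_3$, when they are), and, crucially, carry out the rerouting that turns several BFS-tree paths ending at a common vertex into internally disjoint paths of the prescribed length $3$. This bookkeeping is exactly the technical core of the Bondy--Simonovits / Faudree--Simonovits method, and for the full details — and for the general $\Theta_{k,\ell}$ statement — I would refer the reader to \cite{faudree1983class}.
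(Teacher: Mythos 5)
The paper states this theorem as a citation to Faudree and Simonovits \cite{faudree1983class} and offers no proof of its own, so there is no proof in the paper for you to match; your proposal, too, ultimately defers to that reference for ``the technical core,'' so in that sense the two treatments agree.

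That said, there is a concrete gap in the sketch as written, and it is worth naming because it is exactly where the Faudree--Simonovits argument is hard. Your layer-growth claim $|L_{j+1}|\gtrsim\delta|L_j|$ for $j=0,1,2$ is justified by the observation that a vertex $w\in L_{j+1}$ with three neighbours in $L_j$ produces, after rerouting, a $\Theta_{j+1,3}$. But $\Theta_{j+1,3}$ is only the forbidden graph when $j+1=3$. For $j=1$ the structure produced is $\Theta_{2,3}=K_{2,3}$, and for $j=0$ a triple edge; neither contains $\Theta_3$ (indeed $K_{2,t}$ is $\Theta_3$-free for every $t$, as it has no two vertices joined by even one path of length $3$). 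So $\Theta_3$-freeness puts no direct cap on the number of $L_1$-neighbours of a vertex of $L_2$, and the inequality $|L_2|\gtrsim\delta|L_1|$ does not follow from the mechanism you describe. Controlling the first two layers --- showing that if they fail to expand then some denser local structure already forces $\Theta_3$ --- together with making the rerouting of BFS-tree paths that share internal vertices actually produce three \emph{internally disjoint length-$3$} paths rather than shorter ones, is the real content of \cite{faudree1983class}, and the sketch does not indicate how either step goes. Your preliminary discussion (the auxiliary graph $H$ and the Erd\H{o}s--Gallai matching bound giving only $O(n^{5/3})$) is correct and a useful warm-up, and the overall reduction to minimum degree $\delta\ge cn^{1/3}$ is standard and fine; the gap is confined to, but squarely located at, the heart of the BFS argument.
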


\begin{lemma}\label{lem:c6}
    Any local thrackle drawing of $C_6$ is also a thrackle drawing of $C_6$.
\end{lemma}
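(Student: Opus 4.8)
The plan is to reduce Lemma~\ref{lem:c6} to a finite case analysis governed by the Jordan curve theorem. Write $v_1,\dots,v_6$ for the vertices of $C_6$ and $e_i=v_iv_{i+1}$ for its edges (indices modulo $6$); call the independent pairs $(e_i,e_{i+2})$ the \emph{short diagonals} and the pairs $(e_i,e_{i+3})$ the \emph{long diagonals}. The only subgraph of $C_6$ that is a path of length $3$ consists of three consecutive edges $e_i,e_{i+1},e_{i+2}$, and such a path is plane exactly when its two non-adjacent edges $e_i,e_{i+2}$ do not cross; hence a drawing of $C_6$ is a local thrackle if and only if all six short diagonals cross. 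Since a thrackle drawing of $C_6$ is one in which, in addition, all three long diagonals cross, it suffices to show that in any simple drawing of $C_6$ in which the six short diagonals cross, the three long diagonals cross as well. By the cyclic symmetry of $C_6$ it is enough to prove this for the single pair $(e_1,e_4)$.

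So I would assume toward a contradiction that $e_1$ and $e_4$ do not cross. In the drawing, $e_1=v_1v_2$ is disjoint from $e_2$ and $e_6$ (they are adjacent to it) and from $e_4$ (by assumption), while it is crossed exactly once each by $e_3$ at a point $p$ and by $e_5$ at a point $q$; after reflecting the plane if necessary I may assume $p$ precedes $q$ along $e_1$ as one goes from $v_1$ to $v_2$. I would then carry out a Jordan curve analysis: the sub-arcs of $e_1$, $e_3$ and $e_5$ cut off by $p$ and $q$ (together with the two sub-arcs of $e_1$ reaching $v_1$ and $v_2$) bound a small number of closed curves, and for each of the remaining vertices $v_3,v_4,v_5,v_6$ one determines, from the forced crossings and from simplicity, in which region it must lie. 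This pins down how $e_3,e_5$ leave $p,q$, how $e_2$ leaves $v_2$ and $e_6$ leaves $v_1$, and finally how $e_4=v_4v_5$ can connect its two endpoints; in each surviving configuration either some pair of edges is forced to cross a second time (contradicting simplicity — for instance a third crossing point on $e_1$) or one of the still-required short-diagonal crossings among $(e_2,e_4)$, $(e_3,e_5)$, $(e_4,e_6)$, $(e_2,e_6)$ cannot be realized. This yields the contradiction, so all three long diagonals cross and the drawing is a thrackle. (This non-thrackleability of $C_6$ is exactly what is invoked in the proof of Proposition~\ref{prop:geometric}.)

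The main obstacle is keeping the case analysis short. There are several binary choices to resolve — the order of $p$ and $q$ along $e_1$ (handled by the reflection above), the orientation of the ``triangle'' of vertices around each of the crossings $p$ and $q$ (as in the order-type classification used earlier in the paper), and, for each of the four remaining vertices, the side of each auxiliary closed curve on which it lies — and the argument only closes once one genuinely uses the constraints coming from $e_5$ and $e_6$: the sub-path $v_1v_2v_3v_4v_5$ on its own can be drawn with $e_1\cap e_3$ and $e_2\cap e_4$ crossing but $e_1\cap e_4=\emptyset$, so the obstruction appears only when one tries to close up the hexagon with the last two edges. For straight-line drawings most of these cases collapse because the relative positions of the six points are controlled by a single convex-position pattern, which is the shortcut alluded to after Proposition~\ref{prop:geometric}; in the topological case one must dispose of each configuration by hand, though no individual case is difficult and the length is purely in the enumeration.
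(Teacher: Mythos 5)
Your reduction is correct and is the natural starting point: a simple drawing of $C_6$ is a local thrackle precisely when all six ``short diagonal'' pairs $(e_i,e_{i+2})$ cross, a thrackle additionally requires the three ``long diagonal'' pairs $(e_i,e_{i+3})$ to cross, and by the dihedral symmetry of $C_6$ it suffices to show a single long diagonal, say $(e_1,e_4)$, is forced to cross. (Your appeal to ``reflecting the plane'' to normalize the order of $p$ and $q$ along $e_1$ is imprecise --- what you actually want is the combinatorial reflection of $C_6$ fixing $e_1$ and $e_4$, which swaps $e_3\leftrightarrow e_5$ and reverses $e_1$ --- but that symmetry does exist and does the job.)

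However, what you have written is a plan for a case analysis, not the case analysis itself. Everything after ``I would then carry out a Jordan curve analysis'' is a description of the kind of argument you expect to close, together with an honest acknowledgment that you have not enumerated the cases (``one must dispose of each configuration by hand, though no individual case is difficult and the length is purely in the enumeration''). You give no enumeration of the regions cut out by the arcs of $e_1,e_3,e_5$, no determination of which regions $v_3,v_4,v_5,v_6$ can occupy, and no verification that every branch terminates in either a double crossing or an unrealizable required crossing. Until those cases are actually worked through, the central claim --- that assuming $e_1\cap e_4=\emptyset$ always leads to a contradiction --- is unproved. This is exactly the tedium the paper sidesteps: the authors remark that only for straight-line drawings could the case analysis be shortened, and for the general topological case they instead encode each candidate crossing pattern as an auxiliary graph via the Fulek--Pach planarization-and-bracing construction and let a planarity test decide realizability. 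That computational route is a genuinely different method from yours and has the virtue of being verifiably exhaustive. To make your approach into a proof, you would need to carry the enumeration to completion, and given the number of independent binary choices you yourself list (position of $p$ versus $q$, orientation at each of the two crossings on $e_1$, and the region assignment of four vertices), this is a substantial amount of work that cannot simply be waved at.
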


\begin{proof}
Fulek and Pach \cite{fulek2011computational} invented a computational approach to check whether a given graph is thrackleable. In fact, as we shall describe, their method is more general and can check whether any crossing pattern can be realized as a simple drawing or not. Let $G=(V,E)$ be an oriented graph. (We need the orientation for reference purposes only.) Suppose we are given an ordered list $\ell(e)$ consisting of elements in $E\setminus e$ for each $e\in E$, with the property that $f \in \ell(e)$ if and only if $e\in \ell(f)$, then we can construct an auxiliary graph $H$ through the following procedure: \begin{enumerate}
    \item (Planarization) For every $e\in E$ directed from $u$ to $v$, we replace the edge $e = \{u,v\}$ by a path from $u$ to $v$, through the vertices named as $\{e,f\}$ with $f\in \ell(e)$ following the order of $\ell(e)$. Here, we introduce new vertices $\{e,f\}$ if necessary.

    \item (Subdivision) For each edge between two new vertices created in the planarization step, we replace it with a path of length two, by inserting an additional vertex.

    \item (Bracing) For each new vertex $v = \{e,f\}$ created in the planarization step, there will be exactly four vertices $a,b,c,d$ adjacent to $v$ (they may be original vertices in $V$ or new vertices introduced in subdivision step), where $a,b$ are adjacent to $v$ along segments of of $e$ and $c,d$ are adjacent to $v$ along segments of $f$. We create the edges $\{a,c\}$, $\{a,d\}$, $\{b,c\}$, and $\{b,d\}$.
\end{enumerate} We remark that here $\{e,f\}$ is unordered, so it will be considered in the planarization steps for $e$ and $f$ respectively, which will give it exactly four neighbors.

\begin{claim}
    The following two statements are equivalent: (i) There exists a drawing of $G$ where the crossings on each edge $e$, along its direction, are exactly those between $e$ and $f$ with $f\in \ell(e)$ in this order. (ii) The graph $H$ constructed above according to $G$ and $\ell$ is planar.
\end{claim}

We give a brief explanation to this claim. Suppose $G$ has such a drawing, then we can turn it into a plane drawing by turning each crossing point into a new vertex (planarization). Next, we can subdivide each edge between the crossing points once (subdivision). Finally, we connect the neighbors of each crossing point in their cyclic order (bracing). This can be done easily such that the resulting drawing is still plane, implying that $H$ is planar; On the other hand, suppose we have a plane drawing of $H$, we can draw any edge $e$ of $G$, directed from $u$ to $v$, along the path from $u$ to $v$ in $H$ created in planarization. The vertices created in the subdivision step serve as a guide on which path to follow at a crossing point. The bracing structures will enforce the edges drawn to cross properly without tangencies. This will give a drawing of $G$ as wanted.

Following this approach, we used a computer program to verify Lemma~\ref{lem:c6}. To prove this lemma, it suffices for us to take $G = C_6$, direct its edges arbitrarily, create every possible crossing list $\ell$ that obeys the local thrackle condition, construct $H$ and test for its planarity. If all such $H$ except for those corresponding to thrackles are not planar, then we can conclude the proof. A source code for such a computer program is given in Appendix~\ref{sec:appendix}.
\end{proof}

\begin{figure}
	\centering
	\includegraphics[width= 10cm]{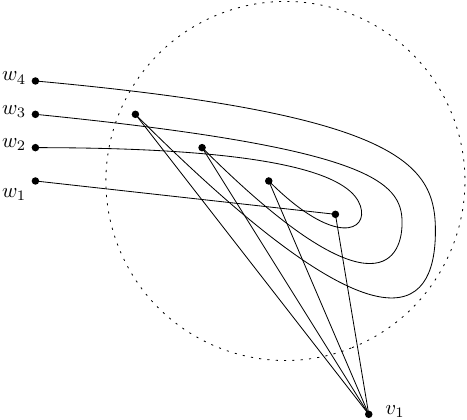}
	\caption{The 1-subdivision of a star drawn as a local thrackle.}
	\label{fig:star}
	\centering
    \bigskip
	\includegraphics[width= 10cm]{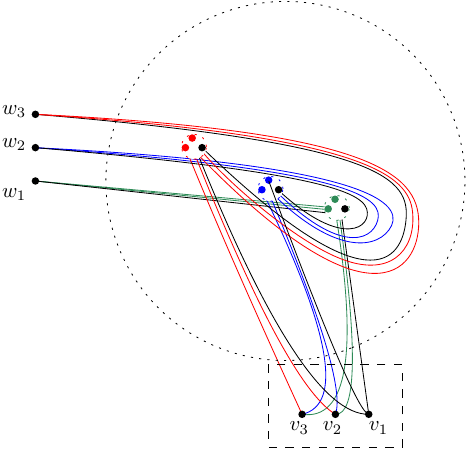}
	\caption{Subdivided $K_{3,3}$ drawn as a local thrackle. The intersections among the edges of the subdivided stars centered at $w_i$ are hidden inside the dotted circles, each of them hiding a drawing that looks like the drawing on Figure \ref{fig:star} (with $w_i$ and $V$ on this figure playing the role of $v_1$ and $W$ on Figure \ref{fig:star}, respectively).}
	\label{fig:k33subdiv}
\end{figure}

\begin{theorem}[Theorem~5.1 in \cite{lovasz1997conway}]\label{thm:notheta3thrackle} 
    A thrackleable graph cannot contain $\Theta_3$ as a subgraph.
\end{theorem}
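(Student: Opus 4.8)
The plan is to reduce the statement to a single finite case and then dispatch it. Restricting a thrackle drawing of a graph $G$ to a subgraph yields a thrackle drawing of that subgraph: independence of edges is preserved, non-adjacent edges still cross exactly once, adjacent edges still meet only at their common endpoint, and no edge meets itself. So if some thrackleable graph contained $\Theta_3$ as a subgraph, then $\Theta_3$ itself would be thrackleable, and it suffices to show that $\Theta_3$ admits no thrackle drawing.

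The conceptual way to see this goes through bipartiteness. The graph $\Theta_3$ is bipartite: denoting its vertices by $u$, $v$ and by the internal vertices $a_i$, $b_i$ of the three paths $ua_ib_iv$ for $i=1,2,3$, the sets $\{u,b_1,b_2,b_3\}$ and $\{v,a_1,a_2,a_3\}$ form a proper $2$-coloring. Since $\Theta_3$ is connected with $8$ vertices and $9$ edges, it has more edges than vertices, and hence violates the thrackle conjecture for bipartite graphs, which is established in \cite{lovasz1997conway}; therefore $\Theta_3$ is not thrackleable. I expect the genuinely hard part of making this route self-contained to be exactly that bipartite bound, whose proof rests on the sign/parity bookkeeping for crossings in bipartite thrackle drawings; without that input one would have to reprove it.

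A fully self-contained alternative is the computational route already employed for Lemma~\ref{lem:c6}. Since $\Theta_3$ has only $9$ edges, the thrackle condition forces the crossing list $\ell(e)$ of every edge $e$ to consist of exactly the five or six edges non-adjacent to $e$, each crossed once; only the order within each list is free, so there are finitely many admissible $\ell$. For each of them one builds the Fulek--Pach auxiliary graph $H$ of \cite{fulek2011computational} and tests its planarity, and if none of these graphs is planar then $\Theta_3$ admits no thrackle drawing, exactly as in the $C_6$ argument. The main obstacle here is the sheer size of the enumeration, which must be trimmed using the automorphism group of $\Theta_3$ (permutations of the three paths together with the simultaneous reversal of all three) and by propagating the constraints forced by partially fixed crossing orders, just as one does to make the $C_6$ check tractable.
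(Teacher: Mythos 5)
The paper does not prove this statement itself; it is stated as a black-box citation of Theorem~5.1 in \cite{lovasz1997conway}, so there is no internal proof to compare against. Your first route is nonetheless a valid alternative derivation: the reduction to showing $\Theta_3$ itself is not thrackleable is correct (restricting a thrackle drawing to a subgraph yields a thrackle drawing of that subgraph), your two-coloring $\{u,b_1,b_2,b_3\}\cup\{v,a_1,a_2,a_3\}$ is indeed a proper bipartition, and $\Theta_3$ has $8$ vertices and $9$ edges, so it would violate the thrackle bound $|E|\le |V|$ for bipartite graphs. That bipartite thrackle bound was in fact established by Lov\'asz, Pach, and Szegedy in the same paper, so your argument simply trades the paper's citation of the $\Theta_3$ theorem for a citation of the bipartite thrackle theorem together with a one-line count. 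This is a legitimate and arguably more transparent reduction, but note that it is not more self-contained than the paper's citation --- the hard input is still imported wholesale from \cite{lovasz1997conway}, as you yourself acknowledge.

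Your second, computational route is correct in principle but I would caution against presenting it as a realistic alternative without more thought. For a thrackle drawing of $\Theta_3$ the \emph{content} of each crossing list $\ell(e)$ is indeed forced (all non-adjacent edges, once each), so only the orderings are free. But $\Theta_3$ has six edges each with $5$ non-adjacent edges and three edges each with $6$, giving $(5!)^6\cdot(6!)^3\approx 10^{21}$ raw cases, many orders of magnitude beyond the $C_6$ enumeration the paper actually ran. Symmetry reduction (the automorphism group has order $12$) and constraint propagation would help, but you would need to argue much more concretely that the search collapses to something tractable before this could be called a working proof rather than a sketch of one.
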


We show the following strengthening of the above theorem:

\begin{claim}\label{claim:notheta3local} 
    A local thrackleable graph cannot contain $\Theta_3$ as a subgraph.
\end{claim}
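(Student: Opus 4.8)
The plan is to reduce the local-thrackle case to the thrackle case, so that we can apply Theorem~\ref{thm:notheta3thrackle}. Suppose for contradiction that a local thrackleable graph $G$ contains $\Theta_3$ as a subgraph; then the induced drawing of $\Theta_3$ is itself a local thrackle drawing (no plane path of length $3$, in particular, survives in a subgraph). So it suffices to show that \emph{every local thrackle drawing of $\Theta_3$ is in fact a thrackle drawing of $\Theta_3$}, and then invoke Theorem~\ref{thm:notheta3thrackle} directly. This is exactly parallel to how Lemma~\ref{lem:c6} is used: there the point is that a local thrackle drawing of $C_6$ is automatically a thrackle drawing, and then the even-cycle obstruction from Woodall kicks in.

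The natural way to carry this out is the computational approach of Fulek and Pach already set up in the proof of Lemma~\ref{lem:c6}. First I would take $G=\Theta_3$ with its $8$ vertices and $9$ edges, orient the edges arbitrarily, and enumerate every crossing list $\ell$ on this edge set that satisfies the local thrackle condition: namely, any two adjacent edges have empty mutual crossing entry, and for every path of length $3$ at least one of its two constituent pairs of non-adjacent edges must cross (this is the combinatorial encoding of ``no plane $P_3$''); pairs of non-adjacent edges that are forced by no $P_3$-condition may or may not cross, and the order of crossings along each edge is also enumerated. For each such $\ell$ I would build the auxiliary graph $H$ by planarization, subdivision, and bracing exactly as in the Claim inside the proof of Lemma~\ref{lem:c6}, and test $H$ for planarity. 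By that Claim, $\ell$ is realizable by a simple drawing iff $H$ is planar; so if the only realizable local-thrackle crossing patterns on $\Theta_3$ are the ones that are already thrackle patterns (every non-adjacent pair crossing), we are done — and for those, Theorem~\ref{thm:notheta3thrackle} says $\Theta_3$ is not thrackleable, contradiction. Since $\Theta_3$ has $9$ edges and $\binom{9}{2}$ minus adjacencies non-adjacent pairs, the search space is larger than for $C_6$ but still a finite, routine computer check; the source code would go alongside the one in Appendix~\ref{sec:appendix}.

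I expect the main obstacle to be whether a purely computational verification is desired or whether a human-readable topological argument is preferred. A cleaner, non-computer route would be: fix the three internally disjoint paths $P_1,P_2,P_3$ of length $3$ between the two branch vertices $x,y$; each pair $P_i\cup P_j$ is a $6$-cycle, so by Lemma~\ref{lem:c6} the drawing restricted to $P_i\cup P_j$ is a thrackle drawing of $C_6$, meaning \emph{all} non-adjacent pairs among the six edges of $P_i\cup P_j$ cross. Ranging over the three choices of $\{i,j\}$, this already forces almost every non-adjacent pair of edges of $\Theta_3$ to cross; the only pairs not covered are those consisting of one ``middle'' edge from $P_i$ and one middle edge from $P_j$ together with — one must check carefully — the pairs are in fact all covered, since the middle edge of $P_i$ is non-adjacent to every edge of $P_j$ and the two appear together in the $6$-cycle $P_i\cup P_j$. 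Hence every non-adjacent pair of edges of $\Theta_3$ crosses, i.e.\ the local thrackle drawing is a thrackle drawing, and Theorem~\ref{thm:notheta3thrackle} finishes the argument. The delicate point, and the one I would double-check, is that \emph{every} non-adjacent pair of edges of $\Theta_3$ does lie in a common $P_i\cup P_j$; if some pair (e.g.\ two edges incident to the same branch vertex but on different paths — but those are adjacent, so fine) slips through, one would patch it with a short direct case analysis or fall back on the Fulek--Pach computation.
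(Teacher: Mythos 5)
Your second, non-computational argument is essentially the paper's proof: every pair of non-adjacent edges of $\Theta_3$ lies in a common $6$-cycle $P_i\cup P_j$, so by Lemma~\ref{lem:c6} all such pairs cross, making the local thrackle drawing a thrackle drawing and contradicting Theorem~\ref{thm:notheta3thrackle}. The Fulek--Pach computation you first sketched would also work but is unnecessary once Lemma~\ref{lem:c6} is available, which is precisely why the paper does not run a second planarity search on $\Theta_3$'s nine edges.
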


\begin{proof}
    Assume on the contrary, then $\Theta_3$ can be drawn as a local thrackle. By Theorem \ref{thm:notheta3thrackle} this is not a thrackle, therefore there are two independent edges that do not cross. The only way this can happen is that there is a subgraph $C_6$ drawn such that a pair of opposite edges do not cross. This is a local thrackle drawing of a $C_6$ which is not a thrackle drawing, contradicting Lemma \ref{lem:c6}, finishing the proof. 
\end{proof}

\begin{claim}\label{claim:knnlocal}
    The $1$-subdivision of the complete graph $K_{n,n}$ is local thrackleable.
\end{claim}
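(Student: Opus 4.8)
The plan is to construct an explicit local thrackle drawing of the $1$-subdivision of $K_{n,n}$. Recall that the $1$-subdivision replaces each edge by a path of length two through a new "subdivision vertex". Label the two sides of $K_{n,n}$ by $W = \{w_1,\dots,w_n\}$ and $V = \{v_1,\dots,v_n\}$, and for each pair $(w_i,v_j)$ let $x_{ij}$ be the subdivision vertex on the edge $w_iv_j$. Then the subdivided graph is exactly the union, over $i$, of stars $S_i$ centered at $w_i$ whose leaves are $\{x_{i1},\dots,x_{in}\}$, together with, for each $j$, the star centered at $v_j$ with leaves $\{x_{1j},\dots,x_{nj}\}$. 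In other words, after identifying the leaves, it is a union of $2n$ stars glued at the $x_{ij}$'s. The starting point is the observation, recorded in Figure~\ref{fig:star}, that a star can be drawn as a local thrackle so that, in addition, all of its edges cross every edge lying in some prescribed outside region; I would use this gadget as a black box.

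First I would fix a convex configuration of the points $v_1,\dots,v_n$ and, far away, the points $w_1,\dots,w_n$, and place each subdivision vertex $x_{ij}$ near $v_j$. The drawing of $K_{n,n}$ itself (straight-line, say, or as in the figure of the subdivided $K_{3,3}$, Figure~\ref{fig:k33subdiv}) need not be plane; the point of subdividing is that we get to re-route. Inside a small disk around each $w_i$ I would insert the local-thrackle star drawing of $S_i$ from Figure~\ref{fig:star}, with the leaves exiting the disk in a controlled cyclic order, and then continue each half-edge $w_ix_{ij}$ out to $v_j$ essentially along the straight segment; symmetrically, inside a small disk around each $v_j$ I insert the star drawing of the star centered at $v_j$ with leaves $x_{1j},\dots,x_{nj}$.

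The crossing pattern then needs to be checked. Two half-edges incident to the same center $w_i$ intersect at that center, as required (this is handled entirely inside the disk by the star gadget). A half-edge at $w_i$ and a half-edge at $v_j$: if $x_{ij}$ is their common endpoint they meet there, otherwise they are independent edges in the subdivision — say $w_ix_{ij}$ and $v_\ell x_{k\ell}$ with $i\ne k$, $j\ne \ell$ — and we must arrange that they cross exactly once. This is where the freedom in routing is used: because each $x_{ij}$ sits near $v_j$, the "long" portions of the half-edges behave like the edges of a (possibly non-plane, but simple) drawing of $K_{n,n}$, and the star gadgets at the two ends can be tuned so that every two such long portions cross an odd number of times, and then cleaned up to cross exactly once using simplicity. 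Finally one must verify that no plane path of length $3$ survives: any path of length $3$ in the subdivision is $w_i\,x_{ij}\,v_j\,x_{kj}$ or $x_{ij}\,w_i\,x_{i\ell}\,v_\ell$ (up to symmetry), and in either case two of its three edges share the center ($w_i$ or $v_j$) or are forced to cross by the star gadget, so it is never plane; this is exactly the local thrackle condition.

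The main obstacle I anticipate is the middle step: making the three requirements compatible simultaneously — that the star at $w_i$ is itself a local thrackle, that every edge of that star crosses every "long portion" passing near $w_i$, and that globally every pair of independent half-edges crosses exactly once rather than, say, twice. Getting "exactly once" (not merely "an odd number of times") is what forces care, and I would address it by choosing the routing of the long portions to be simple by construction (e.g.\ monotone between the two disks) and then invoking the local structure of Figure~\ref{fig:star} to absorb all the extra crossings inside the disks, where the gadget already guarantees the right parity. Everything else — the convex placement, the gluing of components, the enumeration of length-$3$ paths — is routine.
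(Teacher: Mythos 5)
Your high-level plan — decompose the subdivided $K_{n,n}$ into stars around the $w_i$'s and $v_j$'s and re-use the star gadget of Figure~\ref{fig:star} — is the same strategy as the paper's, but the execution contains a genuine error and a genuine gap, and they are connected.

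First, you misidentify which pairs of half-edges actually have to cross. You write that for independent half-edges $w_i x_{ij}$ and $v_\ell x_{k\ell}$ with $i\ne k$ \emph{and} $j\ne\ell$ ``we must arrange that they cross exactly once.'' But such a pair never occurs as the first and last edge of a $3$-path: the only $3$-paths are (up to reversal) $w_i\,x_{ij}\,v_j\,x_{kj}$ with $k\ne i$, whose outer pair is $(w_ix_{ij},\ v_jx_{kj})$ with \emph{the same} $j$, and $x_{ij}\,w_i\,x_{i\ell}\,v_\ell$ with $\ell\ne j$, whose outer pair is $(w_ix_{i\ell},\ v_jx_{ij})$ with \emph{the same} $i$. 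So the pairs that are forced to cross are exactly those where one of the two indices agrees; the pairs you singled out are the ones that are free. Insisting that all the $(i\ne k,\ j\ne\ell)$ pairs cross is not only unnecessary — it is a far stronger (thrackle-like) condition than local-thrackleability, and your ``$x_{ij}$ near $v_j$'' placement makes it geometrically implausible, since the long arc $w_i x_{ij}$ heads toward the $v_j$ cluster and has no reason to pass near $v_\ell$ for $\ell\ne j$.

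Second, and more fundamentally, the ``star gadget'' you invoke does not cover the crossings you do need. Figure~\ref{fig:star} is a local-thrackle drawing of the \emph{$1$-subdivision} of a star, i.e.\ a spider whose edges are \emph{both} halves $w_i x_{ij}$ and $x_{ij} v_j$ of each $2$-path. That is essential: in the $3$-path $w_i\,x_{ij}\,v_j\,x_{kj}$ the two edges that must cross are $w_i x_{ij}$ (at $w_i$'s side) and $v_j x_{kj}$ (at $v_j$'s side), and these both lie in the spider \emph{centered at $v_j$}; the paper's verification is exactly that any $3$-path has a unique interior vertex in $V\cup W$, so its three edges sit inside a single spider drawn as a local thrackle. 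Your version, with a depth-$1$ star $S_i$ living inside a small disk around $w_i$ while its leaves $x_{ij}$ are placed far away near $v_j$, is inconsistent as stated and, even charitably read (the disk contains only the germ of the arcs near $w_i$), gives no control over the crossing of $w_i x_{ij}$ with $v_j x_{kj}$, which happens near $v_j$, not near $w_i$. Your final paragraph asserting that the $3$-paths are ``forced to cross by the star gadget'' is therefore unsupported: the gadget as you have set it up does not put the two relevant edges into a single controlled picture. You do flag ``the middle step'' as the main difficulty, and that is exactly right; but the fix is not parity cleanup of long arcs, it is to use the subdivided-star spider as the unit (with the subdivision vertices in the middle, not pushed to one side) so that each $3$-path's three edges live inside one gadget.
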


\begin{proof}
    First we show that a $1$-subdivision of a star $S_n$ is local thrackleable. For that see Figure \ref{fig:star}. Note that outside of an area that can be made arbitrarily small (the inside of the dotted circle on the figure), the edges incident to the center can be drawn close to each other and the edges incident to the leaves can be drawn parallel. That is, every $2$-path starting at $v_1$ goes very close to a given $2$-path (say the one joining $v_1$ to $w_1$) and only in a small vicinity of the middle vertex of this given $2$-path do these $2$-paths intersect each other. Note that the cyclic order of the paths around $v_i$ is reversed by the time they reach the $w_i$'s.

    We have a drawing of the 1-subdivision of the star $S_n$ with center $v_1$ and leaves $W=\{w_1,\dots,  w_n\}$. Now we draw $n$ copies of this graph so that the leaves are identified to get the drawing of the graph $G$ which is the $1$-subdivision of $K_{n,n}$. The way we do it is that we put $n-1$ more vertices, $v_2,\dots, v_n$, close to $v_1$ and draw the new edges in the following way. For each $w_i$, we had one $2$-path going to $v_1$ and we need $n-1$ more $2$-paths going to $v_2,
    \dots,v_n$. We draw them the same way as we created the original star, just now $w_i$ is the center and the $2$-path from $w_i$ to $v_1$ is the $2$-path close to which all the other $2$-paths go (and they only intersect in a small vicinity of the middle vertex of the $2$-path from $w_i$ to $v_1$). We denote $V=\{v_1,\dots v_n\}$.

    We draw the edges in the vicinity of $v_1$ such that they intersect pairwise at most once and two edges with a common endpoint $v_i$ do not cross. See Figure \ref{fig:k33subdiv}, the dashed rectangle is the vicinity of $v_1$ where such crossings happen.

    Finally, we are left to prove that this is a local thrackle. It is easy to check we have a simple drawing. It remains to check that no 3-edge path is planar. For this, notice that any $3$-path contains exactly one vertex from $V$ and $W$ respectively. This implies that the $3$ edges of any $3$-path belong to a 1-subdivision of a star that was drawn as a local thrackle. For $v_1$ this follows from our drawing of $S_n$, for other $v_i$ it follows from the fact that these edges go close to the respective edge from $v_1$, and for $w_i$ it again follows from the fact that at $w_i$ we drew the edges in distance two as a local thrackle.
\end{proof}

\begin{proof}[Proof of Proposition~\ref{lt:general}]
    The upper bound directly follows from Theorem \ref{thm:notheta3comb} and Claim \ref{claim:notheta3local}.    The lower bound follows from Claim \ref{claim:knnlocal} by observing that in a $1$-subdivision of $K_{n,n}$ we have $n^2+2n$ vertices and $2n^2$ edges.
\end{proof}

Note that Claim \ref{claim:knnlocal} implies that the $3$-subdivision of $K_n$ is local thrackleable, which implies that the $3$-subdivision of any graph is local thrackleable. Therefore arguments that rely on forbidden subdivided copies of some graph (e.g., planarity arguments) won't work when trying to improve the upper bound in Proposition~\ref{lt:general}.

\subsection{Further examples of local thrackles}
To finish this section, we give some examples of graphs that are not subgraphs of a $1$-subdivision of some $K_{n,n}$ that are nevertheless local thrackleable. As the only possible minimal counterexamples for the thrackle conjecture, the following graphs are especially interesting: two cycles connected by a path (called \emph{dumbbells}), three paths each connecting the same two points (called \emph{theta}), and two cycles sharing a vertex (called \emph{figure-eight}) \cite{thrackleorg}.

First we show that dumbbells in which the cycles are triangles, are local thrackleable.

\begin{claim}
    If $G$ is a graph that contains two vertex-disjoint triangles connected by a path with at least one edges, then $G$ is local thrackleable.    
\end{claim}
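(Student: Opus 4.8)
The plan is to construct an explicit local thrackle drawing of the smallest such graph — two vertex-disjoint triangles joined by a single edge — and then argue that all the other graphs in the family arise by adding pendant paths or subdividing the connecting path, operations that preserve the local thrackle property. I would first recall the left-most drawing in Figure~\ref{fig:2triangles}, which already exhibits $G_0$ = (two triangles $+$ one bridge edge) as a local thrackle on $6$ vertices and $7$ edges; the main content is to see why this drawing stays a local thrackle after the graph is enlarged. Concretely, write the two triangles as $T_1$ on vertices $a_1,a_2,a_3$ and $T_2$ on vertices $b_1,b_2,b_3$, with the bridge path $P$ running from $a_1$ to $b_1$.

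First I would verify the base case directly: in the drawing of Figure~\ref{fig:2triangles} one checks that every pair of independent edges either crosses or not, that the drawing is simple, and — crucially — that no $3$-edge subpath is plane. The only $3$-paths in $G_0$ are those using two edges of one triangle and the bridge, or using the bridge and one edge from each triangle (when $P$ has one edge); in each case the drawing must be arranged so that the relevant pair of independent edges among these three does cross. So the construction amounts to placing the triangles and the bridge so that, for each such $3$-path, the two non-adjacent edges cross.

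Next, I would handle the general graph $G$ by two reduction steps. (1) Subdividing the connecting path: if $P$ has $\ell \ge 2$ edges, I would route it as a curve that stays close to the original bridge arc and place its internal vertices along this curve, so that every $3$-subpath lying inside $P$ is $x$-monotone-like along a short arc where no plane $3$-path is forced — actually the cleaner route is to observe that $G$ is a subgraph of a $1$-subdivision of $K_{n,n}$ once the connecting path is long enough, and invoke Claim~\ref{claim:knnlocal}; the genuinely new case is therefore the \emph{short} connecting path, which is the base case above. (2) Adding pendant trees: a leaf or a pendant path attached to any vertex $v$ can be drawn as a bundle of nearly-parallel arcs emanating from $v$ in a tiny neighborhood, in the same spirit as the subdivided-star drawing of Figure~\ref{fig:star}; since no $3$-path can have all three edges outside such a bundle unless it already lay in $G_0$ or in a subdivision handled by Claim~\ref{claim:knnlocal}, the drawing remains a local thrackle.

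The main obstacle I anticipate is the bookkeeping for the base case: one must exhibit a single drawing of the two-triangles-plus-bridge graph in which \emph{simultaneously} the drawing is simple (every two edges meet at most once) and every one of its several $3$-paths fails to be plane. Getting these local constraints to be mutually consistent — especially making the two triangle edges at $a_1$ interact correctly with the bridge edge, and symmetrically at $b_1$ — is the delicate point, and it is exactly what Figure~\ref{fig:2triangles} is there to certify; the rest of the argument is the routine ``draw new edges in a thin neighborhood'' technique already used in Claim~\ref{claim:knnlocal}.
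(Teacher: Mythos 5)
Your base case (two triangles joined by a single bridge edge) agrees with the paper's starting point in Figure~\ref{fig:2triangles}, but the reduction to longer connecting paths contains genuine errors, so the proposal does not work as written.

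First, the suggestion to handle a longer bridge $P$ by ``routing it as a curve that stays close to the original bridge arc'' is self-defeating. If $P$ is drawn as (or near) a simple arc, then any $3$-edge subpath contained entirely within $P$ has no self-crossings, i.e.\ it \emph{is} a plane $3$-path---which is precisely what a local thrackle must avoid. The naive subdivision of an edge into a longer path placed along that edge does not preserve the local thrackle property; it destroys it. Second, the fallback appeal to Claim~\ref{claim:knnlocal} is simply false: a dumbbell with triangle endpoints contains a triangle, whereas the $1$-subdivision of $K_{n,n}$ is bipartite (indeed has girth $8$), so no such dumbbell is a subgraph of it, no matter how long the connecting path is. Third, the pendant-tree discussion is off the point of this particular claim: here $G$ \emph{is} the dumbbell (two triangles joined by a path), not a graph merely containing one, and the claim would anyway be false under the latter reading.

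The paper's proof instead uses \emph{two} base cases---bridge of length $1$ and bridge of length $2$, both shown in Figure~\ref{fig:2triangles}---together with the well-known operation from the thrackle literature that replaces a single edge by a path of three edges while preserving (local) thrackleability. Since that operation increases the bridge length by $2$ each time, starting from lengths $1$ and $2$ covers all lengths $\geq 1$. The two base cases are needed precisely for parity, a point your proposal does not address; and the length-$3$ replacement operation is the correct, nontrivial substitute for the ``stay close to the old arc'' idea, because it reroutes the new internal edges so that adjacent $3$-subpaths of the bridge do self-intersect. Without some such ingredient, the inductive step of your argument fails at the very first subdivision.
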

\begin{proof}
    For dumbbells with a path of length $1$ or $2$ see Figure \ref{fig:2triangles}. Moreover, there is a well-known operation that replaces an edge by a path with $3$ edges \cite{thrackleorg} (see right side of Figure \ref{fig:2triangles}) that maintains thrackleability.  Clearly, this operation also maintains local thrackleability. Starting from the two base cases and repeatedly using this operation, we can get a local thrackle drawing of $G$ as in the claim for any given path length.
\end{proof}

In the proof of the previous claim, we have seen one operation that produces new local thrackleable graphs from existing ones. We show one more such operation. 
 
\begin{figure}[ht]
	\centering
	\includegraphics[scale=0.8, width= 10cm]{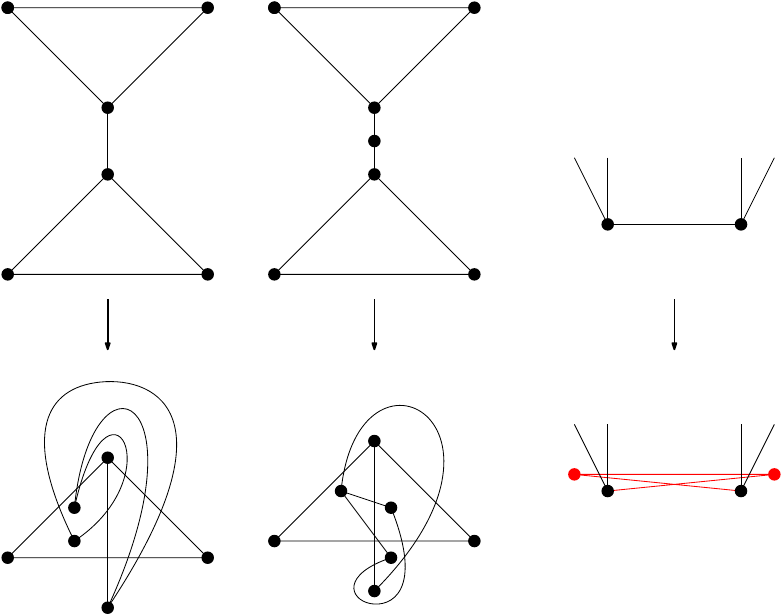}
	\caption{The graphs with two triangles connected by a path are local thrackleable.}
	\label{fig:2triangles}	
\end{figure}

\begin{claim}
    Let $G$ be a local thrackleable graph which has an induced $k$-path for some $k\ge 4$. Then if we add another copy of this path to $G$ with the same endvertices and new inner vertices to get the graph $G'$, then $G'$ is also local thrackleable.
\end{claim}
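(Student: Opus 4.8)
The plan is to exploit the fact that the original $k$-path $P$ in $G$ was drawn as part of a local thrackle, so all its edges pairwise intersect except possibly for some adjacent pairs; in fact, since $G$ has no plane $3$-path, every three consecutive edges of $P$ pairwise cross, and more generally any two edges of $P$ at distance $\ge 2$ along $P$ must cross. The key geometric idea is to draw the new copy $P'$ of the path ``parallel and very close'' to $P$, in the same spirit as the subdivided-star construction in Claim~\ref{claim:knnlocal}: run each edge of $P'$ in a thin tubular neighborhood of the corresponding edge of $P$, so that $P'$ has essentially the same crossing pattern as $P$ (with itself and with every other edge of $G$), and arrange the two endpoints of $P'$ close to the respective endpoints of $P$.

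First I would set up the tubular-neighborhood framework: fix the drawing of $G$ realizing it as a local thrackle, and for each edge $e_i$ of $P$ choose a thin strip around $e_i$ disjoint from all vertices of $G$ other than $e_i$'s endpoints and meeting the other edges of $G$ in the same way $e_i$ does. Then I would draw the inner vertices of $P'$ inside these strips near the midpoints of consecutive edges and route the edges of $P'$ through the strips, making sure at the two shared endpoints $u,v$ of $P$ and $P'$ that the first and last edges of $P'$ leave $u$ and $v$ in a direction adjacent (in the rotation) to the corresponding edges of $P$, so that the drawing stays simple. This guarantees $P'$ inherits: (a) every edge of $P'$ crosses every edge of $G$ that is not on $P$ and not adjacent to it exactly once (copying $P$); (b) each edge of $P'$ crosses each edge of $P'$ at distance $\ge 2$ once (copying the crossings inside $P$); (c) $e_i'$ crosses $e_j$ for $|i-j|\ge 2$; and (d) adjacent edges of $P'$, and $e_i'$ with $e_{i\pm1}$, do not create problems because running parallel to $P$ they meet only near the shared inner/endpoint vertices, once, at a common endpoint or a single proper crossing, matching the pattern already present for $P$.

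With the drawing in hand, the remaining task is to verify that $G'$ has no plane $3$-path. A $3$-path in $G'$ uses three edges; I would do a short case analysis on how many of these edges lie on the new path $P'$. If none lie on $P'$, the $3$-path lies in $G$ and is non-plane by hypothesis. If all three lie on $P'$, it is a $3$-subpath of $P'$, whose three consecutive edges pairwise cross by construction (mirroring $P$), so it is non-plane. The mixed cases — where the $3$-path has edges both in $P'$ and outside — are where the hypothesis $k\ge 4$ and that the $k$-path is \emph{induced} enter: because $P$ is induced in $G$, the only edges of $G$ incident to an inner vertex of $P$ (equivalently, to an inner vertex of $P'$, which shares no neighbors outside $P'$ with the rest) are the two path edges, so a $3$-path mixing $P'$-edges and non-$P'$-edges must pass through an endpoint $u$ or $v$ of $P'$; one then checks, using the ``parallel to $P$'' placement, that such a $3$-path has the same crossing status as the corresponding $3$-path through $P$ in $G$, hence is non-plane. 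The length condition $k\ge4$ is what ensures an inner edge of $P'$ that is not adjacent to an endpoint edge exists, forcing the relevant crossings and preventing a short $3$-path from sneaking entirely into a ``flat'' part of the parallel copy.

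The main obstacle I anticipate is the bookkeeping at the two shared endpoints $u$ and $v$: routing the first and last edges of $P'$ out of $u$ and $v$ so that (i) the whole drawing remains \emph{simple} (no edge pair meets twice, no tangency), and (ii) no plane $3$-path is created using one edge of $P$ at $u$, one edge of $P'$ at $u$, and one further edge. Away from $u$ and $v$ the tubular-neighborhood copying is routine and clearly preserves everything, but near the endpoints the rotation systems of $u$ and $v$ must be extended carefully; this is exactly the kind of local adjustment handled in the star construction of Claim~\ref{claim:knnlocal}, and I would reuse that idea, choosing the rotation at $u$ so that $e_1'$ is ``just next to'' $e_1$ and symmetrically at $v$. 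A secondary subtlety is confirming that $e_i'$ and $e_j$ (one primed, one not, $|i-j|\ge2$) genuinely cross rather than merely run close without meeting; this follows because $e_i'$ is homotopic within its strip to $e_i$ rel endpoints-region while $e_j$ crosses that strip transversally, but it deserves an explicit sentence.

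\smallskip

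\noindent\textit{Remark.} I note the statement as printed says ``$G'$ is also thrackleable'' but almost certainly means ``$G'$ is also local thrackleable''; the argument above proves the latter, which is what the surrounding discussion (operations producing new local thrackleable graphs) requires.
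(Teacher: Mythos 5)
Your approach is the same as the paper's: draw the new path $P'$ very close to the original $P$, with new inner vertices near the corresponding old ones and new edges running alongside the old ones (this is exactly what Figure~\ref{fig:duplicatepath} depicts). You also correctly flag the rotation at the shared endpoints $u,v$ as the delicate point, and correctly note the statement should read ``local thrackleable'' rather than ``thrackleable.''

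However, several auxiliary claims in your write-up are incorrect and would need repair. First, ``every three consecutive edges of $P$ pairwise cross'' is false: in a simple topological graph adjacent edges share an endpoint and cannot cross, so among three consecutive edges only the outer (non-adjacent) pair is forced to cross. Second, ``any two edges of $P$ at distance $\ge 2$ along $P$ must cross'' is not implied by the local-thrackle condition either; only edges at distance exactly~$2$ are forced to cross, and for distance~$\ge3$ no crossing is guaranteed (nor used). Third, in your item~(d) you assert that $e_i'$ and $e_{i\pm1}$ ``meet only near the shared inner/endpoint vertices,'' but for $2\le i\le k-1$ the edges $e_i'$ and $e_{i\pm1}$ share no vertex at all (one has endpoints on the new path, the other on the old), so this case needs a different treatment. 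Finally, on the crux issue you flag: the $3$-paths at $u$ that mix old and new edges, such as $e_1, e_1', e_2'$ and $f,e_1,e_1'$ for an edge $f$ incident to the first inner vertex $w_1$ of $P$, really do require a concrete routing argument — naive ``parallel and close'' makes $e_2'$ run close to $e_2$ (which does \emph{not} cross $e_1$) and makes $e_1'$ terminate near $w_1$ without forcing it to cross $f$. Your proposal recognizes the problem but stops at ``choose the rotation at $u$ so that $e_1'$ is just next to $e_1$,'' which by itself does not create the required crossings; the paper resolves this in the picture by bending $e_1'$ and $e_2'$ near $w_1$ so the needed crossings appear while the drawing stays simple. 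So: same idea, but the verification as written contains false statements and does not close the one subtlety it itself identifies.
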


\begin{proof}
    See Figure \ref{fig:duplicatepath} to see how the new path can be added to a local thrackle drawing of $G$. New points are very close to corresponding old points and new edges go very close to corresponding old edges. Note that in the drawing we wanted to visualize how the new points and edges relate to the old ones, and so for better visibility we have drawn the path of $G$ as a plane path, even though in reality of course it is not, as this was a local thrackle drawing of $G$.
\end{proof}

\begin{figure}[ht]
	\centering
	\includegraphics[scale=0.8, width= 13cm]{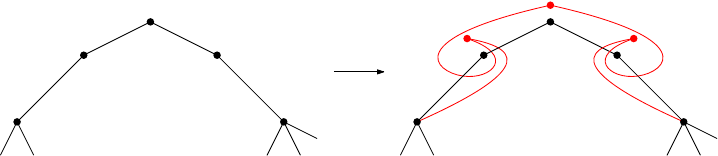}
	\caption{Duplicating a path of length at least $4$.}
	\label{fig:duplicatepath}	
\end{figure}

Using the above claim and that all cycles are local thrackleable (except for $C_4$) among others we get that:
\begin{corollary}
    If $k\ge 4$ and $l\ge 1$, then any theta graph with paths of sizes $k,k,l$ is local thrackleable.
\end{corollary}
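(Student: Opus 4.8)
The plan is to realize the theta graph as the output of the path-duplication claim proved above. Write $a,b$ for the two vertices of degree three, let $P_1,P_2$ be the two internally-disjoint $a$--$b$ paths of length $k$, and let $P_3$ be the $a$--$b$ path of length $l$, so that the theta graph in question equals $P_1\cup P_2\cup P_3$. First I would form the cycle $G:=P_1\cup P_3$. Since $k\ge 4$ and $l\ge 1$, this cycle has $k+l\ge 5$ vertices, so in particular it is not $C_4$, and hence it is local thrackleable by the facts recalled above: odd cycles are (even straight-line) thrackleable by Theorem~\ref{thm:woodall}, while even cycles of length at least $8$ are geometric local thrackleable by Proposition~\ref{prop:geometric}.

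The second step is to observe that $P_1$ is an \emph{induced} path of $G$ of length $k\ge 4$. Indeed, the $k+1$ vertices of $P_1$ form a contiguous arc of the cycle $G$, and a cycle has no chords, so the only edges of $G$ joining two vertices of $P_1$ are the $k$ edges of $P_1$ itself. Granting this, I would apply the path-duplication claim to $G$ together with its induced path $P_1$: we may attach a fresh copy $P_1'$ of $P_1$ between $a$ and $b$ (with brand-new interior vertices), and the resulting graph is again local thrackleable. But this graph is exactly $P_1\cup P_3\cup P_1'=P_1\cup P_2\cup P_3$, i.e., the desired theta graph, which completes the argument.

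The step I expect to be the genuine obstacle is the induced-path check in the degenerate regime. The claim that $P_1$ is induced in $G$ relies on $P_1$ occupying strictly fewer than all the vertices of $G$, which holds precisely when $l\ge 2$; if the third path is a single edge, then $P_1$ spans all of $V(G)=V(C_{k+1})$ and the closing edge of $G$ becomes a chord of $P_1$, so the duplication claim does not apply verbatim. The finitely many small configurations that this leaves open (notably $l=1$; one should also confirm that the cycle $G$ one selects is not $C_6$) I would handle by hand — either via an explicit drawing, or by repeatedly applying the length-one-to-length-three edge-replacement operation, which preserves local thrackleability, starting from one or two fixed base cases.
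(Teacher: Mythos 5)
Your approach is exactly the paper's: take the cycle $C_{k+l}$ — local thrackleable since $k+l\ge5$, so it is not $C_4$ — and apply the preceding path-duplication claim to the $k$-path $P_1$ to obtain the theta graph. You are right to flag $l=1$ as a gap: in $C_{k+1}$ the path $P_1$ uses every vertex and the remaining edge $ab$ is a chord, so $P_1$ is not an \emph{induced} path and the duplication claim does not apply verbatim. This is a real subtlety that the paper's one-line derivation passes over; your proposed patch via the length-one-to-length-three edge replacement, starting from one or two small base cases, is a sensible way to close it. On the other hand, your worry about $C_6$ is unnecessary: the paper invokes the fact that every cycle other than $C_4$ is local thrackleable, and this covers $C_6$, which is thrackleable (just not with straight-line edges). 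You reached for Proposition~\ref{prop:geometric}, which imposes the stronger straight-line requirement, but that is not needed for this step.
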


\section{No self-intersecting paths of length at most 4}\label{sec:4}

In this section, we prove Theorem \ref{thm:selfcross} by combining the arguments in \cite{PPTT} and a lemma due to Fox, Pach, and Suk \cite{fox2013number} on decomposable families of curves.   We start by proving the following theorem.

\begin{theorem}\label{thmgamma}
    Let $G$ be an $n$-vertex bipartite simple topological graph such that every edge in $G$ crosses a curve $\gamma$ exactly once.  If $G$ does not contain a self-intersecting path of length 4, then $|E(G)| \leq O(n\log n)$.
\end{theorem}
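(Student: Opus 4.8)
The plan is to mimic the strategy of Pach, Pinchasi, Tardos, and T\'oth~\cite{PPTT} for bounding $x$-monotone graphs with no self-intersecting $3$-path, but adapted to the "one crossing with $\gamma$" setting and to paths of length $4$. Since every edge of $G$ crosses $\gamma$ exactly once, each edge $e$ is split by $\gamma$ into two sub-arcs; call the endpoint of $e$ whose sub-arc we will traverse first (say, the side of $\gamma$ that we fix arbitrarily) the \emph{left end} and the other the \emph{right end}. This induces, at each vertex $v$, a rotation of the incident edges together with a natural linear order on the edges leaving $v$ to the "left" and on those leaving $v$ to the "right", obtained from the order in which their sub-arcs cross $\gamma$ (two edges sharing the endpoint $v$ and leaving on the same side of $\gamma$ are comparable because they cannot cross each other, $G$ being simple, so their initial segments from $v$ to $\gamma$ are nested or disjoint along $\gamma$). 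First I would record these orders and observe, exactly as in~\cite{PPTT}, that a self-intersecting path of length $4$ is forced whenever certain "monotone" configurations of three consecutive edges appear: if $e_1e_2e_3e_4$ is a path and two of its non-incident edges cross, we are done, so the hypothesis says the four edges are pairwise non-crossing except possibly at shared endpoints, and this pairwise-non-crossing condition, combined with the linear orders along $\gamma$, severely restricts how the path can "turn".

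Second, I would set up the double counting. Orient/charge each edge $e=uv$ to the vertex, among $u,v$, at which $e$ is \emph{not extremal} in the appropriate order — more precisely, following~\cite{PPTT}, delete at each vertex a bounded number of extremal edges (the highest/lowest on each side of $\gamma$ in the rotation, and the first/last in the $\gamma$-order), losing only $O(n)$ edges, so that the surviving graph $G'$ has the property that every surviving edge is "interior" at both endpoints. The no-self-intersecting-$4$-path condition should then translate, via the forbidden monotone configurations, into the statement that $G'$ restricted to edges crossing $\gamma$ in a given small interval behaves like an interval/laminar structure — and here is where the lemma of Fox, Pach, and Suk~\cite{fox2013number} on decomposable families of curves enters: the sub-arcs of the edges on one side of $\gamma$, being pairwise non-crossing off their endpoints within each charged class, form a decomposable (in fact laminar-like) family, and the lemma lets us pass to a large sub-family that is "grid-like", i.e.\ can be swept by a curve so that the combinatorics reduce to the one-dimensional $x$-monotone case already handled in~\cite{PPTT}. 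Applying the $O(n\log n)$ bound of~\cite{PPTT} to each of the boundedly many pieces, and summing, yields $|E(G')| \le O(n\log n)$, hence $|E(G)|\le O(n\log n)$.

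The main obstacle I expect is the middle step: making precise the claim that the no-self-intersecting-$4$-path hypothesis, together with the $\gamma$-crossing structure, forces the edge sub-arcs into a decomposable family to which~\cite{fox2013number} applies. In the $x$-monotone case of~\cite{PPTT} the global left-to-right coordinate does this automatically; here the only global structure is $\gamma$, and an edge's two sub-arcs live on the two sides of $\gamma$ and may wind around before crossing. One has to verify that a path can use at most a bounded number of edges "going the wrong way" before creating a self-crossing of length $\le 4$, so that after the $O(n)$ deletion the remaining arcs on each side are genuinely nested relative to their $\gamma$-order. I would handle this by a careful but elementary case analysis on a path $e_1e_2e_3e_4$: fixing the rotation and the $\gamma$-order at the two interior vertices of the path, and using that no two of $e_1,e_3$ (and no two of $e_2,e_4$) cross, one checks that all but $O(1)$ "types" of turn are impossible. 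The rest — the reduction to~\cite{PPTT} and the application of~\cite{fox2013number} — should then be routine, modulo bookkeeping of the constants absorbed into the $O(\cdot)$.
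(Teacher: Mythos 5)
Your proposal correctly intuits that the curve $\gamma$ imposes a global linear order on the edges (via the order of their crossing points along $\gamma$) and that one should look for forbidden patterns in that order, in the spirit of~\cite{PPTT}. However, there are two significant problems.

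First, the Fox--Pach--Suk decomposition lemma (Lemma~\ref{lemdecomp}) does not belong inside the proof of Theorem~\ref{thmgamma} at all; you have the architecture inverted. In the paper, that lemma is used afterward to reduce the \emph{general} Theorem~\ref{thm:selfcross} (no distinguished curve) to the special situation of Theorem~\ref{thmgamma}: Lemma~\ref{lemdecomp} extracts a sub-collection of $\Omega(m/\log m)$ edges that decomposes into groups, each group having one edge crossing all the others, and that one edge then plays the role of $\gamma$. Inside Theorem~\ref{thmgamma} itself, $\gamma$ is already given, so no decomposition is needed. Trying to invoke Lemma~\ref{lemdecomp} here would only yield a sub-family of size $\Omega(m/\log m)$, which is the wrong direction when you are trying to prove an $O(n\log n)$ \emph{upper} bound on $m$.

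Second, your middle step (deleting extremal edges, charging to non-extremal endpoints, and hoping the surviving sub-arcs form a laminar/nested family) is not what is needed, and you yourself flag it as the main obstacle without resolving it. The actual proof is much more direct: classify edges into two types by a left/right turning rule at the crossing with $\gamma$, keep the majority type, list those $m$ edges in their $\gamma$-order, and record each edge $u_iv_i$ as a column of a $2\times m$ matrix $M$ with top entry $u_i\in U$ and bottom entry $v_i\in V$. The no-self-crossing-$4$-path hypothesis translates into $M$ avoiding two explicit $2\times 4$ patterns $F_1$ and $F_2$ (a short topological argument shows that such a submatrix would produce a vertex trapped in a region bounded by $\gamma$ and two edges of the path, forcing the fourth edge to create a crossing). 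One then applies the matrix lemma of Pach--Pinchasi--Tardos--T\'oth, which states that any $2\times m$ matrix with at most $n$ distinct entries, distinct columns, and no $F_1$ or $F_2$ submatrix has $m\le 17n\log_2 n$. No curve decomposition, no edge deletion, and no laminarity claim is involved. Your proposal would need to replace the vague decomposability step with this concrete forbidden-submatrix encoding to close the gap.
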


\begin{proof}

Let $V(G) = U\cup V$ be the bipartition of the vertex set such that every edge in $G$ has one endpoint in $U$ and the other in $V$.  Let $x$ and $y$ be the endpoints of $\gamma$.  For each edge $e  = uv$ in $G$, where $u \in U$ and $v \in V$, we say that $e$ is of $\textit{type 1}$ if when starting at vertex $u$ and tracing along $e$ until we reach $\gamma$, and then turning right and tracing along $\gamma$, we reach vertex $x$.  Otherwise (if we reach vertex $y$ instead), we say that $e$ is of $\textit{type 2}$.  Without loss of generality, we can assume that at least half of the edges between $U$ and $V$ are of type 1, since a symmetric argument would follow otherwise.  Let $E'\subset E(G)$ be the edges in $G$ of type 1, where $|E'| = m \geq |E|/2$.  Let $e_1,e_2,\ldots, e_m$ be the edges in $E'$ listed from $x$ to $y$ along $\gamma$, in the order they intersect $\gamma$ (ties are broken arbitrarily).  For each $i \in [m]$, let $e_i = u_iv_i$ with $u_i\in U$, $v_i\in V$ and set \begin{equation*}
    M = \left(\begin{array}{cccc}
    u_1 & u_2 & \cdots & u_m \\
    v_1  & v_2 &  \cdots & v_m
\end{array}\right).
\end{equation*} Hence, $M$ is a $2\times m$ matrix with whose entries are from $U\cup V$, and $u_i \neq v_j$ for all $i,j$.  We now make the following observation.

\begin{claim}
Matrix $M$ does not contain a submatrix of the form \begin{equation*}
    F_1 = \left(\begin{array}{cccc}
    a & b & a & b \\
    \ast  &s & s & \ast
\end{array}\right) \hspace{.5cm}\textnormal{or}\hspace{.5cm}F_2 = \left(\begin{array}{cccc}
    \ast & a & a & \ast \\
   s  & t & s & t
\end{array}\right),  
\end{equation*} where $a\neq b$, $s\neq t$ and $\ast$ stands for arbitrary (not necessarily equal) vertices. 
\end{claim}

\begin{proof}
The first three columns of $F_1$ correspond to a path $P$ of length 3 in $G$ and $P$ must be not self-intersecting, otherwise the whole matrix $F_1$ would correspond to a self-intersecting path of length 4. We can see that $P$ crosses $\gamma$ as in Figure \ref{fig:length4}. Hence, vertex $b$ lies inside the enclosed region bounded by $\gamma$ and the edges corresponding to the first and third column of $F_1$. The fourth column of $F_1$ corresponds to an edge emanating out of vertex $b$ that must cross $\gamma$ (for the first time) outside of this enclosed region. However, since $G$ does not contain a self-intersecting path of length 4, we have a contradiction. A symmetric argument follows for $F_2$. \end{proof}

\begin{figure}[ht]
    \centering
     \includegraphics[width=0.3\linewidth]{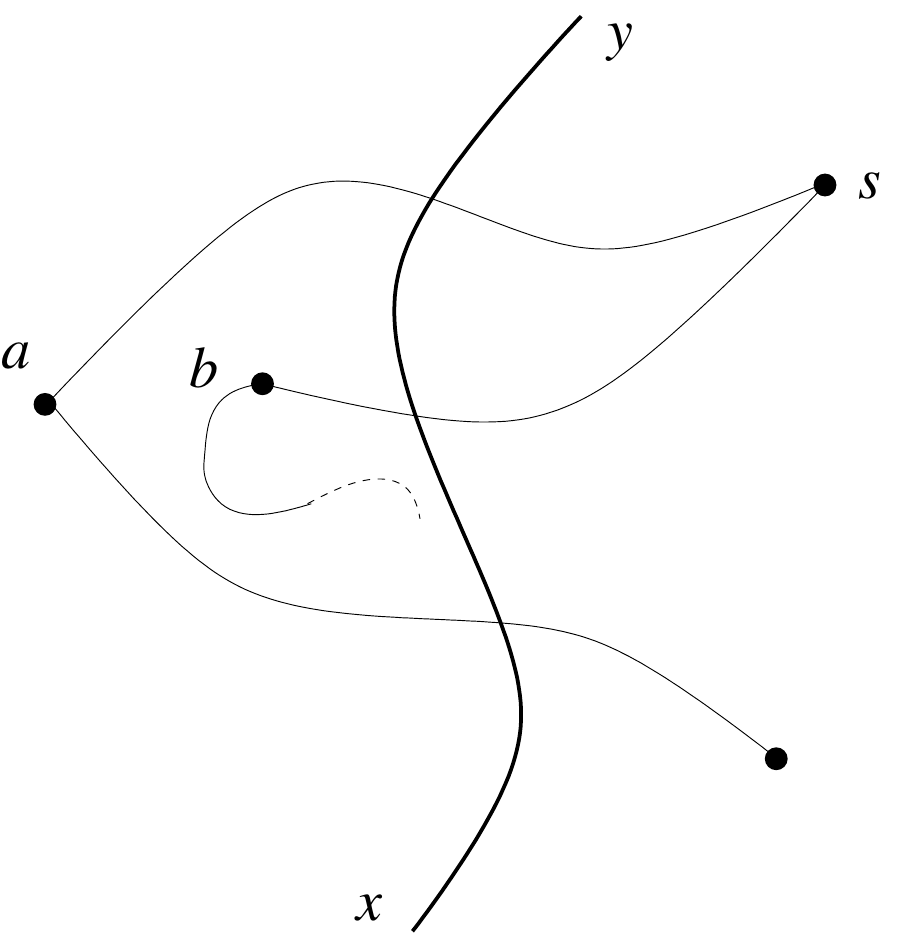}
    \caption{Path of length 4 corresponding to $F_1$, with $\gamma = xy$.}
    \label{fig:length4}
\end{figure}

In \cite{PPTT}, Pach, Pinchasi, Tardos, and T\'oth proved the following.

\begin{lemma}[Pach-Pinchasi-Tardos-T\'oth]
Let $M$ be a $2 \times m$ matrix with at most $n$ distinct entries, all of whose
columns are different. If $M$ has no $2 \times 4$ submatrix of the form $F_1$ or of $F_2$, then $m \leq 17n \log_2 n$.
\end{lemma}

\noindent    Hence, we have $m \leq O(n\log n)$ which implies that $|E(G)| = O(n\log n).$
\end{proof}

Call a collection $\mathcal{C}$ of curves in the plane \textit{decomposable} if there is a partition $\mathcal{C} = \mathcal{C}_1 \cup \cdots\cup \mathcal{C}_t$ such that each $\mathcal{C}_i$ contains a curve which intersects all other curves in $\mathcal{C}_i$, and for $i \neq j$, the curves
in $\mathcal{C}_i$ are disjoint from the curves in $\mathcal{C}_j$ .  We will use the following lemma due to Fox, Pach, and Suk \cite{fox2013number} Lemma~3.2.

\begin{lemma}[Fox-Pach-Suk]\label{lemdecomp}
    There is an absolute constant $c > 0$ such that every collection $\mathcal{C}$ of $m \geq 2$ curves
such that each pair of them intersect in at most $t$ points has a decomposable subcollection of size at least $\frac{cm}{t\log m}$.
\end{lemma}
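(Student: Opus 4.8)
The plan is to pass to the \emph{intersection graph} $G$ of $\mathcal{C}$ -- vertices are the curves, and two curves are adjacent when they meet -- and to reformulate the conclusion graph-theoretically. Unwinding the definition of ``decomposable'', a subcollection $\mathcal{C}'\subseteq\mathcal{C}$ is decomposable exactly when every connected component of the induced subgraph $G[\mathcal{C}']$ has a universal vertex (the center of the corresponding part $\mathcal{C}_i$), so we must find a large induced subgraph of $G$ all of whose components have a universal vertex. Two observations frame the argument: first, $G$ is a string graph, and so is the intersection graph of \emph{every} subcollection; second, since each pair of curves meets in at most $t$ points, the intersection graph of any $N$-vertex subcollection is realized by a drawing with at most $t\binom{N}{2}$ crossings, which is the handle through which the parameter $t$ will enter. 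The two trivial extremes already produce decomposable subcollections: a curve meeting many others gives one large star, and a large set of pairwise disjoint curves gives many singleton parts; the work is to interpolate between them.

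I would run a recursive decomposition of $G$ driven by a degree threshold $\tau$, to be tuned to roughly $m/(t\log m)$. If the current collection contains a curve that meets at least $\tau$ of the others, output that curve together with all curves meeting it: this is a single star, hence a decomposable subcollection of size at least $\tau$, and we are done. Otherwise every vertex of the current intersection graph has degree below $\tau$, so the graph is sparse; here I would invoke the separator theorem for string graphs of Fox and Pach (in a quantitative form controlling the separator size in terms of the number of edges, and hence in terms of $\tau$, $m$ and $t$) to split the current collection into a separator set $Z$ of controlled size together with pairwise \emph{non-crossing} pieces $\mathcal{C}_1,\dots,\mathcal{C}_r$, each of size at most a fixed fraction of the current one. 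The key point is that curves in distinct pieces do not cross, hence are non-adjacent in $G$, so decomposable subcollections chosen independently inside the $\mathcal{C}_i$ can simply be unioned into a decomposable subcollection of $\bigcup_i\mathcal{C}_i$; thus I recurse on each $\mathcal{C}_i$ and combine the outputs. One also discards the separator curves $Z$ at that level.

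The remaining work is the accounting along this recursion, which has depth $O(\log m)$. One must choose $\tau$ and the truncation rule (stop recursing when a piece is small enough to be handled directly, e.g.\ by a single vertex or a maximal clique) so that (i) in the ``large star'' branch the size $\tau$ already meets the target, and (ii) the curves lost to the separators $Z$, summed over all $O(\log m)$ levels, amount to only a lower-order fraction of $m$; then the surviving curves, grouped by pieces, form a decomposable subcollection of size $\Omega(m/(t\log m))$. The step I expect to be the main obstacle is precisely this balancing: the string-graph separator theorem carries its own logarithmic overhead and its bound scales with the crossing parameter $t$, so iterating it to drive the piece sizes down and summing the discarded curves over the recursion tree has to be done carefully to avoid eroding more than a constant fraction of the collection -- and it is exactly this analysis that is responsible for both the $t$ and the $\log m$ appearing in the statement. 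Since the lemma is quoted from \cite{fox2013number}, I would only sketch this balancing and refer there for the full details.
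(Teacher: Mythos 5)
The paper itself does not prove this lemma: it is imported verbatim as Lemma~3.2 of Fox--Pach--Suk \cite{fox2013number}, so there is no internal proof to compare against, and your proposal in the end also defers its decisive step to that same reference. Your framing is correct as far as it goes: decomposability of $\mathcal{C}'$ is equivalent to every component of the induced intersection graph having a universal vertex, decomposable subfamilies chosen in pairwise non-crossing pieces can be unioned, and the parameter $t$ enters through the planarization, i.e.\ through the separator bound $O(\sqrt{X})$ for a family of curves with $X$ crossing points (you should use this crossing-number form of the Fox--Pach separator theorem rather than the general string-graph separator stated in terms of the number of edges).

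The genuine gap is exactly the step you label ``balancing'' and postpone: with the scheme as you describe it, the balancing cannot be done, so this is not a routine detail but the heart of the lemma. If the global degree threshold is $\tau\approx m/(t\log m)$, then in the sparse branch there can still be up to $m\tau/2$ crossing pairs, hence $\approx m^2/(2\log m)$ crossing points, so already the first separator discards $\Theta(m/\sqrt{\log m})$ curves; at level $j$ of the recursion the total separator loss is of order $m\sqrt{2^j/\log m}$, so a budget of $m/2$ discarded curves is exhausted after only $O(\log\log m)$ levels, at which point the surviving pieces are still almost as large as $m$ (only polylogarithmically smaller), each has maximum degree below $\tau$, and neither one star per piece nor one curve per piece comes close to $m/(t\log m)$. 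Trying instead to close the argument by induction on the pieces fails as well, because the fraction of curves lost to a separator in one round, about $1/\sqrt{\log p}$, is larger than the slack of order $1/\log p$ gained from the decrease of $\log p$. The natural repair---a piece-relative threshold of order $p/(t\log^2 p)$---does make the recursion terminate, but then the same accounting only yields a decomposable subfamily of size $\Omega\bigl(m/(t\log^2 m)\bigr)$, one logarithm short of the statement (still enough for a weakened Theorem~\ref{thm:selfcross} with an extra $\log n$ factor). So the single-logarithm bound requires an idea beyond the star-versus-separator dichotomy you sketch, and since that is precisely what the lemma asserts, your proposal as written does not establish it; quoting \cite{fox2013number}, as the paper does, is the appropriate way to handle this statement.
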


Finally, we are ready to prove Theorem \ref{thm:selfcross}.  Since every graph has a bipartite subgraph with at least half of its edges, Theorem \ref{thm:selfcross} follows immediately from the following.  

\begin{theorem}
 Every bipartite $n$-vertex simple topological graph $G$ with no self-intersecting path of length 4 has at most $O(n\log^2 n)$ edges.  
\end{theorem}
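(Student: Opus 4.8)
The plan is to reduce the bipartite statement to Theorem~\ref{thmgamma} by partitioning the edge set of $G$ into $O(\log n)$ classes, each of which can be handled by a single curve $\gamma$ that every edge in the class crosses exactly once. First I would regard each edge of $G$ as a curve in the plane; since $G$ is simple, any two edges cross at most once, so the edge set is a collection of $m = |E(G)|$ curves with pairwise intersection at most $t = 1$. Applying Lemma~\ref{lemdecomp} with $t = 1$, I get a decomposable subcollection $\mathcal{C}' \subseteq E(G)$ of size at least $cm/\log m$, together with its partition $\mathcal{C}' = \mathcal{C}_1 \cup \cdots \cup \mathcal{C}_t$ into parts that are mutually disjoint, each $\mathcal{C}_i$ having a ``core'' curve (edge) $\gamma_i$ meeting every other edge of $\mathcal{C}_i$.

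The key point is that for each part $\mathcal{C}_i$, the core edge $\gamma_i$ serves as the curve $\gamma$ in Theorem~\ref{thmgamma}: every edge of $\mathcal{C}_i \setminus \{\gamma_i\}$ crosses $\gamma_i$, and by simplicity it crosses $\gamma_i$ exactly once. (One slightly delicate point: $\gamma_i$ itself has endpoints that are vertices of $G$; but in applying Theorem~\ref{thmgamma} to the subgraph with edge set $\mathcal{C}_i \setminus \{\gamma_i\}$, the curve $\gamma = \gamma_i$ need not be an edge of that subgraph, and its being an arc with two endpoints in the plane is exactly what Theorem~\ref{thmgamma} requires. A core edge meeting every other curve of its part cannot be parallel to them, so ``meets'' indeed means ``crosses once''.) The subgraph $G_i$ of $G$ with edge set $\mathcal{C}_i$ is bipartite (as a subgraph of $G$), is simple, contains no self-intersecting path of length $4$, and all but one of its edges cross the single curve $\gamma_i$ exactly once; discarding $\gamma_i$ loses only one edge, so Theorem~\ref{thmgamma} gives $|\mathcal{C}_i| \le O(n_i \log n_i) + 1 = O(n \log n)$, where $n_i$ is the number of vertices spanned by $G_i$. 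Summing over $i$ and using that the parts $\mathcal{C}_i$ are edge-disjoint and that $\sum_i n_i$ could be as large as $tn$, I want instead to bound $t$: since the parts are vertex-disjoint as curve families are disjoint --- wait, disjointness of curves means the parts share no \emph{points}, hence no vertices --- so $\sum_i n_i \le n$ and $t \le n$. Therefore $|\mathcal{C}'| = \sum_i |\mathcal{C}_i| \le \sum_i O(n_i \log n_i) \le O(n \log n)$.

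Combining the two bounds, $cm/\log m \le |\mathcal{C}'| \le O(n\log n)$, which yields $m \le O(n \log m \log n)$. Since trivially $m \le n^2$, we have $\log m \le 2\log n$, so $m = |E(G)| \le O(n \log^2 n)$, as required. Then Theorem~\ref{thm:selfcross} follows by taking a bipartite subgraph with at least half the edges, which still has no self-intersecting path of length $4$.

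I expect the main obstacle to be the bookkeeping around the core curves $\gamma_i$ and making sure the hypotheses of Theorem~\ref{thmgamma} are met cleanly: namely, that the parts of a decomposable family being point-disjoint forces the corresponding vertex sets to be disjoint (so the $n_i$ sum to at most $n$), and that ``$\gamma_i$ intersects every other curve in $\mathcal{C}_i$'' really does give a \emph{transversal crossing exactly once} for each such edge rather than a shared endpoint --- the latter would happen only if two edges of $G$ share a vertex, in which case they intersect exactly once at that vertex, so a core edge sharing an endpoint with some $e \in \mathcal{C}_i$ still ``intersects'' it, but then $e$ need not cross the \emph{interior} of $\gamma_i$. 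To sidestep this, I would either restrict attention to the subfamily of $\mathcal{C}_i$ consisting of edges disjoint from $\gamma_i$ except at crossings (losing at most a bounded-degree worth of edges, hence $O(n)$ total), or, more simply, replace each core curve by a slightly extended curve that properly crosses all the relevant edges; either way the loss is only an additive $O(n)$ over all parts and does not affect the final bound.
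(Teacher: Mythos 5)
Your proof is correct and follows the paper's approach exactly: apply Lemma~\ref{lemdecomp} to the edge set of $G$ viewed as a family of curves (pairwise intersecting at most once by simplicity), bound each part of the resulting decomposable subfamily via Theorem~\ref{thmgamma} with the core edge serving as the curve $\gamma$, and sum using that the parts are point-disjoint so $\sum_i |V_i| \le n$. You also correctly flag and resolve a technical point the paper elides---that a core edge might share an endpoint with some members of its part rather than crossing them---but since those edges number only $O(|V_i|)$ per part, the $O(n\log^2 n)$ bound is unaffected.
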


\begin{proof}
Let $G = (V, E)$ be a bipartite simple topological graph on $n$
vertices with no self-intersecting path of length 4. By Lemma \ref{lemdecomp}, there is a subset $E'\subset E(G)$ such that $|E'| \geq c|E(G)|/ \log |E(G)|$, where $c$ is an
absolute constant and $E'$ is decomposable. Hence, there is a partition\begin{equation*}
    E' = E_1\cup E_2\cup \cdots \cup E_t
\end{equation*}
\noindent such that $E_i$ has an edge $e_i$ that crosses every other edge in $E_i$, and for $i \neq j$, the edges in $E_i$ are disjoint from every edge in $E_j$.  Let $V_i$ be the endpoints of the edges in $E_i$.  By Theorem \ref{thmgamma}, $|E_i| \leq O(|V_i|\log |V_i|)$.  Hence \begin{equation*}
    \frac{c|E(G)|}{\log |E(G)|} \leq \sum\limits_{i = 1}^t |E_i | \leq \sum\limits_{i = 1}^t O(|V_i|\log |V_i|) = O(n\log n).
\end{equation*} Therefore, we have $|E(G)| = O(n\log^2n).$
\end{proof}

\section{Related problems and concluding remarks}\label{sec:remarks}

\noindent \textbf{1.} Two topological graphs $G$ and $H$ are \textit{isomorphic} if there is a homeomorphism of the sphere that transforms $G$ to $H$, where this sphere is the one-point compactification of the plane we draw the graphs on. The proof of Theorem~\ref{main} finds more than a weak-isomorphic copy of $C_{k,k}$ any simple drawing of a large enough complete graph. The drawing of $K_{k,k}$ we find as a subgraph agrees with $C_{k,k}$ in its \textit{extended rotation system}, that is, the cyclic orders of edges emanating from all vertices and crossings. According to Theorem~1 in the recent paper~\cite{aichholzer2023drawings}, this determines $H$ up to triangle flips followed by an isomorphism. Here, a \textit{triangle flip} refers to the operation of moving one edge of a triangular cell formed by three pairwise crossing edges over the opposite crossing of the two other edges.

\medskip

\noindent \textbf{2.}  The following 4-uniform hypergraph Ramsey problem was also studied independently by Negami \cite{Ne} and Mathias Schacht (private communication).  Let $S_{a,b} = (U, V, E)$ be the 4-uniform hypergraph with the vertex set the disjoint union of $U$ and $V$, where $|U| = a$ and $|V| = b$, such that $E(S_{a,b}) = \{\{x, y,z,w\} : x\ne y, z\ne w, x,y \in U, \textnormal{ and } z,w \in V \}.$  What is the minimum integer $r(S_{n,n})  = N$ such that for any red/blue coloring of the edges of $S_{N,N}$, there is a monochromatic copy of $S_{n,n}$?  Following the arguments given in Section 2, one can show that $r(S_{n,n}) < 2^{2^{O(n^2)}}$, while a standard probabilistic argument shows that $r(S_{n,n}) > 2^{\Omega(n^3)}$.

\bigskip\noindent{\bf Acknowledgement}
This material is based upon work supported by the National Science Foundation under Grant No. DMS-1928930, while the authors were in residence at the Simons Laufer Mathematical Sciences Institute in Berkeley, California, during the 2025 Extremal Combinatorics Program.

\bibliographystyle{abbrv}
{\footnotesize
\bibliography{main}}

\appendix

\section{Program for Lemma~\ref{lem:c6}}\label{sec:appendix}

The Python program was ran in June 2025 on Google Colab platform. The execution time was about 9 seconds. In the program, the vertices of $C_6$ are denoted as $0,1,2,3,4,5$, and the edges of $C_6$ are denoted as $(0,1), (1,2), (2,3), (3,4), (4,5), (5,0)$. The edge $(i,i+1)$ is directed from $i$ to $i+1$, where the indices are considered modulo $6$. Without loss of generality, we assume the edges $(0,1)$ and $(3,4)$ do not cross to avoid producing a thrackle.

\lstinputlisting[breaklines=true, frame=tb, language=Python, basicstyle={\footnotesize}]{c6.py}

\end{document}